\documentclass[a4paper,12pt]{amsart}
\usepackage{amssymb, amsmath, amsthm, amsfonts}
\usepackage{extsizes}
\usepackage{hyperref}
\usepackage{xcolor,graphicx}
\usepackage{hyperref}
\usepackage{svg}
\usepackage{diagbox}
\usepackage{graphicx,color}    

\usepackage{url} 

\newtheorem{theorem}{Theorem}[section]
\newtheorem{corollary}[theorem]{Corollary}
\newtheorem{proposition}[theorem]{Proposition}

\theoremstyle{definition}
\newtheorem{definition}[theorem]{Definition}
\newtheorem{example}[theorem]{Example}
\newtheorem{remark}[theorem]{Remark}

\def\r{\mathbb R}
\def\l{\mathbb L}

\begin{document}

\title{Newton's problem  of minimal resistance in Lorentz-Minkowski space }
\author{Rafael L\'opez}
\address{Department of Geometry and Topology\\ University of Granada. 18071 Granada, Spain}
\email{rcamino@ugr.es}
 \keywords{}
  \subjclass{49Q10, 49K30,   52A15 }
\begin{abstract}
We extend Newton's problem of minimal resistance to the Lorentz-Minkowski space. We derive the functional energy and  determine the Euler-Lagrange equation. In contrast to the Euclidean case, this equation is quasilinear elliptic, and thus, a maximum principle holds in this context. We obtain  the solutions of separable variables of this equation via separation of variables. Furthermore, we find all radial solutions to the problem, which present conical singularities at the origin.  We also analyze the Single Shock Condition. 
  \end{abstract}

\maketitle
\section{Statement of   Newton's problem in Lorentz-Minkowski space}

The purpose of this paper is to extend   Newton's problem of minimal resistance   to the Lorentz-Minkowski space $\l^3$. In Euclidean space $\r^3$, and proposed by Newton in  his {\it Principia Mathematica}, the problem consists of finding the shape of a solid body that experiences  minimal resistance while moving at a constant  velocity through a homogeneous fluid. The literature on this problem is extensive, and from a mathematical viewpoint, we refer the reader to the surveys   \cite{bu,bk} and the references therein. Variants of Newton's problem have been considered recently regarding the type of bodies, collisions, and fluid \cite{bfk,bk,cl,hk,lo2,pt,pt2}

The motivation for   extending   Newton's problem to $\l^3$ is that the metric of $\l^3$ is not Riemannian which implies important differences. The existence of vectors whose self-inner product may be   zero or negative is the reason for the interest in $\l^3$ in Special Relativity. 
 First, we provide the necessary  context following formally the same approach as in the classical problem. Consider the Lorentz-Minkowski space $\l^3$, which is the Euclidean space $\r^3$ with Cartesian coordinates $(x,y,z)$, and  endowed with the Lorentzian metric $\langle,\rangle=dx^2+dy^2-dz^2$.  

The assumptions of the problem in $\l^3$ are the same as in Euclidean space with the only difference being that the metric used to  measure angles is now the Lorentzian metric of $\l^3$. Thus, we assume that   the fluid is uniform, its particles are non-interacting,  move with the same velocity, and when they collide with the (external) surface $S$ of the body $\mathcal{B}$, they transfer their normal  momentum without friction.  In $\r^3$, the model satisfies the sine-squared pressure law, which requires the computation of angles between vectors. In $\l^3$, there is no a natural notion of angle between arbitrary vectors in general, except when that the vectors are timelike. Therefore, the   differences with respect to the Euclidean setting are as follows:
\begin{enumerate}
\item It is important to specify the direction of the fluid. In $\l^3$ there are spacelike, lightlike, and timelike directions. In our model, the chosen direction of the fluid is the $z$-axis (time coordinate in Relativity) which is of the timelike type. Let   $\vec{e}_3=(0,0,1)$. 
\item The  surface $S$  of     $\mathcal{B}$ is   spacelike. A spacelike surface in $\l^3$ is a surface whose induced metric is Riemannian. This implies that its normal vector $N$  is timelike. This allows us to well-define the angle between $N$ and the direction  $\vec{e}_3$ of the particles. 
\end{enumerate}

\begin{figure}[hbtp]
\begin{center}
\includegraphics{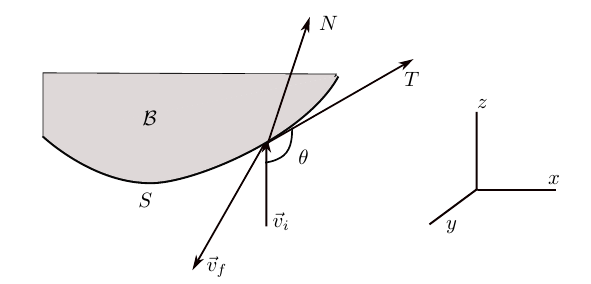}
  \end{center}
\caption{The model of the resistance problem in $\l^3$. The initial velocity of the particles  is $\vec{e}_3=-\sinh\theta T+\cosh\theta N$, and the final velocity is $\vec{v}_f=-\sinh\theta T-\cosh\theta N$.}\label{fig1}
\end{figure}

After   determining the direction in which the particles move, and the type of body they collide with, the  resistance problem in $\l^3$  can   be formulated as follows: {\it find the body whose boundary surface is spacelike and  offers minimum or maximum  resistance  to the motion within a fluid whose particles move with constant velocity in the direction $\vec{e}_3$}. 

We   specify the mathematical  framework of the problem (Fig. \ref{fig1}). Assume that $S$ is  the graph of a function $u$ defined on a domain $\Omega$ of $\r^2$, where we identify $\r^2$ with the $xy$-plane of $\r^3$. We assume that  the admissible functions belongs to the class of $C^1$  piecewise functions on $\overline{\Omega}$. Since $S$ is   spacelike, it follows that $|Du|<1$ in $\Omega$. The unit normal vector of $S$ is 
\begin{equation}\label{n}
N=\frac{1}{\sqrt{1-|Du|^2}}\left( Du,1\right).
\end{equation}
We   compute the resistance of $\mathcal{B}$ as it moves through  the fluid.  We assume that each  particle has unit mass and unit speed, and that the  velocity is $\vec{v}_i=\vec{e}_3$. Since $\vec{v}_i$ and $N$ are timelike and $N$ points upwards, there exists $\theta>0$ such that 
\begin{equation}\label{nv}
\langle\vec{v}_i,N\rangle=\langle \vec{e}_3,N\rangle=-\cosh\theta.
\end{equation}
The number $\theta$ is called the angle between $\vec{v}_i$ and $N$. Consider the plane containing $\vec{v}_i$ and the final velocity $\vec{v}_f$ of the particle after colliding with $S$. Let $\{\vec{e}_x,\vec{e}_3\}$ be a unit basis for this plane (Fig. \ref{fig1}). Suppose that the point where the particle collides with $S$ is $(x_0,y_0,u(x_0,y_0))$, where $\vec{e}_x=\frac{Du}{|Du|}$.  
Decomposing $\vec{v}_i$ into its tangential and normal components with respect to $S$, we have
 $$\vec{v}_i=-\sinh\theta T+\cosh\theta N.$$
  When a particle  collides  with $S$, the tangential component of its final velocity $\vec{v}_f$ remains unchanged, while  the normal component changes sign. Thus,
  $$\vec{v}_f=-\sinh\theta T-\cosh\theta N.$$
  It follows that $\vec{v}_f-\vec{v}_i=-2\cosh\theta N$.     The resistance contributed by a single  particle is the component of its momentum in the direction of $\vec{v}_i$, namely, 
 $$\langle \vec{v}_f-\vec{v}_i,\vec{v}_i\rangle=-2\cosh\theta \langle N,\vec{v}_i\rangle=2(\cosh\theta)^2.$$
 This identity can be seen as the model in $\l^3$ satisfying a {\it hyperbolic cosine-squared law}.  Using \eqref{n} and the relation \eqref{nv}, we obtain 
 $$(\cosh\theta)^2=\frac{1}{1-|Du|^2}.$$
 Up to this point,  the resistance problem is unconstrained, as  $u$ is arbitrary except for the spacelike condition $|Du|<1$. To  compute the total resistance of the body $\mathcal{B}$, we impose  the ``Single Shock Property'':   after colliding with  $S$, particles  do not strike the surface again.   Consequently,  up to constant factors, we have the following definition.
 
 \begin{definition}Let $S$ be a spacelike surface in $\l^3$ given by $z=u(x,y)$, where $u$ is a $C^1$ function defined in  a bounded domain $\Omega\subset\r^2$. The resistance of $S$ against a fluid   moving along the $z$-direction with constant velocity is 
 \begin{equation}\label{eq1}
 E[u]=\int_\Omega \frac{1}{1-|Du|^2}\, dxdy.
 \end{equation}
 \end{definition}
 
 Initially, the problem can be formulated as maximizing or minimizing the energy $E$. Recall that a similar situation is the problem of the area of a surface in $\r^3$ or in $\l^3$, whereas in $\r^3$, the problem is to minimize (minimal surfaces) but in $\l^3$ it is to maximize (maximal surfaces). 
 
 In order to find solutions of the Euler-Lagrange equation associated with $E$, it is natural to restrict the class of admissible functions.  In Section \ref{s3}, we prove that linear functions are the only  solutions to Newton's problem that can be expressed by separation of variables. Section \ref{s4} is devoted to analyzing the problem of maximizing the energy. In Section \ref{s5}, we assume that the solution is radial, finding explicit parametrizations of the solutions. In contrast to the Euclidean case, the radial solution can be defined up to the rotation axis,  with a conical-type singularity at the axis. Finally, we study the Single Shock Condition in the Lorentzian context (Section \ref{s6})
  
\section{Preliminaries and examples}\label{sec2}

In the Lorentz-Minkowski space, a vector $\vec{v}$ is said to be  spacelike (resp. timelike, lightlike) if $\langle \vec{v},\vec{v}\rangle>0$ (resp. $<0$, $=0$). The modulus of a vector $\vec{v}$ is $|\vec{v}|=\sqrt{|\langle \vec{v},\vec{v}\rangle}$. The set of   lightlike vectors is the lightlike cone $\{\vec{v}\in\l^3:\langle \vec{v},\vec{v}\rangle=0\}$.

Two timelike vectors $\vec{v}, \vec{w}\in\l^3$ cannot be orthogonal, i.e., $\langle \vec{v},\vec{w}\rangle\not=0$. We say that they are in the same timelike cone (or that they have the same timelike orientation) if $\langle \vec{v},\vec{w}\rangle<0$. This provides a timelike orientation on $\l^3$ for  all timelike vectors once a reference timelike vector is fixed in $\l^3$. We fix the timelike orientation induced by the vector $\vec{e}_3=(0,0,1)$.  The reverse Cauchy-Schwarz inequality for timelike vectors asserts that $|\vec{v}|^2|\vec{w}|^2<\langle \vec{v},\vec{w}\rangle^2$. If $\vec{v}$ and $\vec{w}$ have the same timelike orientation, the angle between $\vec{v}$ and $\vec{w}$  is defined  as the    unique $\theta\geq 0$ such that 
$$\cosh\theta=-\frac{\langle \vec{v},\vec{w}\rangle}{|\vec{v}||\vec{w}|}.$$
See \cite{lo}. An   immersion $\Psi\colon S\to\l^3$ of a surface $S$ is called spacelike if the induced metric endowed from $\l^3$ is Riemannian at all   points of $S$. If $S$ is the graph of a function $u$, this is equivalent to $|Du|<1$, or equivalently, that  the unit normal vector   given in \eqref{n} is timelike. Note that  $N$ has the same timelike orientation as $\vec{e}_3$., and  the angle $\theta$ between $N$ and $\vec{e}_3$ at each point of $S$ is given by   
$$\cosh\theta=\frac{1}{\sqrt{1-|Du|^2}}.$$
An interesting surface   in $\l^3$ is the lightlike cone, which is defined as any translation of $\mathcal{K}=\{(x,y,z)\in\l^3\colon x^2+y^2=z^2\}$, as well as, the upper lightlike cone $\mathcal{K}^+=\mathcal{K}\cap\{z> 0\}$ and the lower lightlike cone $\mathcal{K}^-=\mathcal{K}\cap\{z< 0\}$. The   induced metric on $\mathcal{K}$ is degenerate. 
 If the vertex is situated at a point $p$, we denote these by  $\mathcal{K}_p$,   $\mathcal{K}^+_{p}$ and $\mathcal{K}^-_{p}$, respectively.

 We calculate the resistance of some spacelike surfaces. The following examples are   surfaces of revolution obtained by rotating a planar curve $u=u(r)$ contained in the $xz$-plane  about the $z$-axis, where $u\colon [0,R]\to\r$. The domain of $u(x,y)=u(\sqrt{x^2+y^2})$ is the disk  $B_R\subset\r^2$  of radius $R>0$ centered at the origin of $\r^2$.  In polar coordinates, the resistance of $u$  is
 \begin{equation}\label{eq2}
 E[u]=2\pi\int_{0}^{R}\frac{r}{1-u'(r)^2}\, dr.
 \end{equation}
In all examples, we will take the boundary condition   $u(R)=0$. 
\begin{enumerate}
\item Horizontal disks. Let  $u(x,y)=0$ in $B_R$. Then 
 $E[u]=\pi R^2$.

\item  Hyperbolic planes. A hyperbolic plane of radius $\rho>0$ centered at the origin of $\l^3$ is defined by  $x^2+y^2-z^2=-\rho^2$. These surfaces,with two components,  are spacelike and play the role of spheres in $\l^3$ because their Gaussian curvature  is constant.  Consider   the upper component    
\begin{equation}\label{hj}
 u(r)=-\sqrt{\rho^2+R^2}+\sqrt{\rho^2+r^2},
\end{equation}
which is called a hyperbolic cap of radius $\rho$.  Then, $u(R)=0$, and  
$$E[u] = \frac{\pi  R^2 \left(2 \rho^2+R^2\right)}{2 \rho^2}.$$
Note that, for a fixed domain $B_R$, we have
$$\lim_{\rho\to 0}E[u]=\infty,\quad \lim_{\rho\to \infty}E[u]= \pi R^2.$$
Thus, as $\rho\to 0$, the hyperbolic cap  \eqref{hj} tends to   the upper lightlike cone $\mathcal{K}^+_{(0,0,-R)}$. 

Here, we establish a   relation between the resistance of a disk $B_R$ and a hyperbolic cap in order to compare them with the Euclidean case. Recall that Newton obtained that the ratio between the resistances of a sphere and a disk of the same radius is $1/2$. From the above computations,   the analogous result in $\l^3$ gives 
$$\frac{\mbox{resistance of hyperbolic cap of radius $R$}}{\mbox{resistance of a disk of radius $R$}}=\frac{3}{2}.$$

\item Spacelike cones.  Consider a cone $z^2=\lambda^2(x^2+y^2)$, where $\lambda\in (0,1)$. The parameter $\lambda$ represents the slope of the cone,  and the condition $0<\lambda<1$ ensures  that the cone is   spacelike. Taking  the upper part of the cone 
\begin{equation}\label{cj}
 u(r)=-\lambda R+\lambda r,\quad r\in [0,R],
\end{equation}
we  have $u(R)=0$. Then 
$$E[u]= \frac{\pi R^2}{1-\lambda^2}.$$
Consequently,
$$\lim_{\lambda\to 0}E[u]=\pi R^2,\quad \lim_{\lambda\to 1}E[u]= \infty.$$
For  a fixed $R$,  if $\lambda \to 1$, the spacelike cone $u$ tends to $\mathcal{K}^+_{(0,0,-R)}$.
 
 \end{enumerate}

It is clear that the study of extremals (or critical points) of the energy $E$ is invariant under  rigid motions of $\l^3$ and under dilations.

\begin{proposition} \label{pr-22}
Let $\Omega\subset\r^2$ be a convex bounded domain. The problem of finding extremals of the energy $E$ is invariant under  dilations with center $O\in\Omega$.
\end{proposition}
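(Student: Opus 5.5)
Without loss of generality we place the center of the dilation at the origin, $O=0\in\Omega$, using the invariance of $E$ under horizontal translations. For $\lambda>0$ we consider the dilation $\Phi_\lambda(p)=\lambda p$ of $\l^3$. It maps the graph of $u$ over $\Omega$ to the graph of
$$u_\lambda(x,y)=\lambda\, u\!\left(\frac{x}{\lambda},\frac{y}{\lambda}\right),\qquad (x,y)\in\Omega_\lambda:=\lambda\Omega .$$
Convexity of $\Omega$ together with $0\in\Omega$ makes $\Omega$ star-shaped with respect to $O$. Hence the domains $\Omega_\lambda$ are nested ($\Omega_\lambda\subset\Omega$ for $\lambda\le 1$), and $\Omega_\lambda$ is again a convex bounded domain containing $O$. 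This is the only place where convexity is used: the class of admissible domains is preserved, and the dilated problems are comparable on a common region.

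The key computation is that the integrand is invariant under the dilation. By the chain rule, $Du_\lambda(x,y)=(Du)(x/\lambda,y/\lambda)$. Therefore $|Du_\lambda|<1$ holds exactly when $|Du|<1$, so spacelike graphs go to spacelike graphs. Likewise, piecewise $C^1$ regularity and boundary conditions of the form $u=\varphi$ on $\partial\Omega$ are carried to $u_\lambda=\lambda\varphi(\cdot/\lambda)$ on $\partial\Omega_\lambda$. The change of variables $(x,y)=\lambda(\xi,\eta)$ then gives
$$E_{\Omega_\lambda}[u_\lambda]=\int_{\Omega_\lambda}\frac{dxdy}{1-|Du_\lambda|^2}=\lambda^2\int_\Omega\frac{d\xi d\eta}{1-|Du|^2}=\lambda^2E_\Omega[u].$$

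From this identity we conclude. The correspondence $u\mapsto u_\lambda$ is a bijection between admissible functions on $\Omega$ and on $\Omega_\lambda$, and compactly supported variations $u+t\phi$ correspond to $u_\lambda+t\phi_\lambda$. It follows that
$$\frac{d}{dt}\Big|_{t=0}E_{\Omega_\lambda}[u_\lambda+t\phi_\lambda]=\lambda^2\,\frac{d}{dt}\Big|_{t=0}E_\Omega[u+t\phi].$$
So $u$ is an extremal, or a minimizer or maximizer in a given class, if and only if $u_\lambda$ is one. As a check, the Euler--Lagrange equation $\operatorname{div}\!\big(Du/(1-|Du|^2)^2\big)=0$ is homogeneous of degree $-1$ under the rescaling and is therefore preserved.

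The main obstacle is conceptual rather than computational: one has to state precisely what ``invariant'' means. I would phrase it as the equivalence of extremality for $u$ on $\Omega$ and for $u_\lambda$ on $\lambda\Omega$, and note that the positive factor $\lambda^2$ does not change the ordering of energies. This is why both the minimization and the maximization problems are preserved.
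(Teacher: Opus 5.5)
Your proposal is correct and follows the paper's own proof: the rescaling $u_\lambda(x,y)=\lambda u(x/\lambda,y/\lambda)$ on $\lambda\Omega$ leaves the gradient unchanged pointwise, so the change of variables gives $E[u_\lambda]=\lambda^2E[u]$ and extremals correspond. Your treatment of the variations $(u+t\phi)_\lambda$ just makes precise the paper's closing remark that a constant factor does not change the extremals, and, as in the paper, convexity plays no essential role.
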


\begin{proof} 
Let  $c>0$ and suppose $O=(0,0)$. Given a function $u=u(x,y)$, by the convexity of $\Omega$, we define 
$$v\colon c\cdot \Omega\to\r, \quad v(x,y)=c u(\frac{1}{c}(x,y)).$$
Let $\Phi\colon\r^2\to\r^2$ be the dilation defined by $\Phi(x,y)=c(x,y)$. Note that $v\circ\Phi=cu$ and $|Dv|^2=|Du|^2\circ \Phi^{-1}$. By the  change of variables formula, and since $\mbox{Jac}(\Phi)=c^2$, we have 
\begin{equation*}
E[v]=\int_{c\Omega}\frac{1}{1-|Dv|^2}dxdy =
\int_\Omega\frac{c^2}{1-|Du|^2}\, dxdy=c^2 E[u].
\end{equation*}
Since the energy $E$ changes by a constant factor, the extremals are the same regardless of the value of $c$.
\end{proof}
\section{The Euler-Lagrange equation}\label{s3}

We calculate the Euler-Lagrange equation for the functional $E$. We assume that $u$ is of class$C^2$ in its domain. Since the Lagrangian  of $E$ does not depend on $u$, the Euler-Lagrange equation is 
$$\mbox{div}\frac{Du}{(1-|Du|^2)^2}=0.$$
Expanding this expression gives the quasilinear equation 
\begin{equation}\label{eqs}
(1-|Du|^2)\Delta u+4 D^2u(Du,Du)=0,
\end{equation}
or equivalently, 
\begin{equation}\label{eqs2}
(1+3u_x^2-u_y^2)u_{xx}+(1-u_x^2+3u_y^2)u_{yy}+8 u_x u_y u_{xy}=0.
\end{equation}
  If $u$ is a solution to \eqref{eqs2}, we say that $u$ is an extremal of $E$, and its graph   $\{(x,y,u(x,y))\colon (x,y)\in\Omega\}$  is called a {\it stationary surface}. The simplest examples of  stationary surfaces are spacelike planes $z= u(x,y)=ax+bu+c$, where $a^2+b^2<1$ (spacelike condition). A major difference with the Euclidean case is that \eqref{eqs2} is an elliptic equation.

\begin{proposition}\label{pr-31}
 The Euler-Lagrange equation of the functional $E$ is of quasilinear elliptic type.
\end{proposition}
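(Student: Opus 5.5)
We write equation \eqref{eqs2} in the form $a_{ij}(Du)\,u_{ij}=0$ and show that the symmetric coefficient matrix is positive definite at every point where the spacelike condition $|Du|<1$ holds. Reading off the coefficients from \eqref{eqs2} (splitting the mixed term $8u_xu_yu_{xy}$ as $4u_xu_y u_{xy}+4u_xu_yu_{yx}$), the matrix is
$$A(p)=\begin{pmatrix} 1+3p_1^2-p_2^2 & 4p_1p_2\\ 4p_1p_2 & 1-p_1^2+3p_2^2\end{pmatrix},\qquad p=Du=(p_1,p_2).$$
Equivalently, from \eqref{eqs}, $A(p)=(1-|p|^2)I+4\,p\otimes p$. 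The coefficients depend only on $Du$ and not on the second derivatives, so the equation is quasilinear. It therefore remains to prove ellipticity, that is, $A(p)>0$ whenever $|p|<1$.

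To do this we compute the eigenvalues directly from the form $(1-|p|^2)I+4p\otimes p$. If $p=0$ then $A=I$. If $p\neq 0$, the vector $p$ is an eigenvector with eigenvalue $1-|p|^2+4|p|^2=1+3|p|^2$, and the orthogonal vector $p^\perp=(-p_2,p_1)$ is an eigenvector with eigenvalue $1-|p|^2$. Both eigenvalues are strictly positive exactly when $|p|<1$, which is the spacelike condition on admissible $u$. As an alternative check one can compute $\operatorname{tr}A=2+2|p|^2>0$ and $\det A=(1-|p|^2)(1+3|p|^2)>0$.

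Hence \eqref{eqs2} is elliptic on the class of spacelike graphs. It is moreover uniformly elliptic on compact subsets where $|Du|\le c<1$, with ellipticity ratio bounded by $(1+3c^2)/(1-c^2)$. No step is a real obstacle; the only point needing care is to correctly symmetrize the mixed derivative coefficient $8u_xu_y$ into two off-diagonal entries $4u_xu_y$, since otherwise the determinant comes out wrong. Ellipticity degenerates only as $|Du|\to 1$, that is, as the surface approaches lightlike directions. This is the contrast with the Euclidean Newton functional $\int (1+|Du|^2)^{-1}$, whose analogous matrix $(1+|p|^2)I-4p\otimes p$ has eigenvalue $1-3|p|^2$, which changes sign.
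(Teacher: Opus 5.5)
Your proof is correct and follows the same route as the paper: it computes the eigenvalues $1-|Du|^2$ and $1+3|Du|^2$ of the coefficient matrix of \eqref{eqs2} and notes that both are positive under the spacelike condition $|Du|<1$. Your decomposition $A=(1-|p|^2)I+4\,p\otimes p$, with $p$ and $p^\perp$ as eigenvectors, just spells out the computation that the paper leaves implicit.
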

\begin{proof} It suffices to compute the eigenvalues of the second-order terms of \eqref{eqs2}, which yields  
$$\lambda=1-|Du|^2,\quad \Lambda=1+3|Du|^2.$$
Then $0<\lambda\leq \Lambda$ due to the spacelike condition $|Du|<1$. 
\end{proof}
This result contrasts with  its counterpart in Euclidean space, which we unify in the following presentation. Let $\epsilon\in\{-1,1\}$ and define the functional 
\begin{equation}\label{e1}
E_\epsilon[u]=\int_\Omega\frac{1}{1+\epsilon|Du|^2},\qquad \begin{array}{ll}
\epsilon=1,& \mbox{Euclidean case},\\
\epsilon=-1,& \mbox{Lorentzian case}.
\end{array}\end{equation}
The Euler-Lagrange equation of $E_\epsilon$ is 
\begin{equation}\label{eqs3}
(1+\epsilon|Du|^2)\Delta u-4\epsilon D^2u(Du,Du)=0,
\end{equation}
and the eigenvalues of the second order terms are 
\begin{equation*}
\lambda_\epsilon=1-(2+\epsilon)|Du|^2,\qquad 
\Lambda_\epsilon=1+(2-\epsilon)|Du|^2.
\end{equation*}
In $\r^3$, the equation \eqref{eqs3} is of  elliptic-hyperbolic  type, as indicated in \cite{bfk}.  

Coming back to $\l^3$, and as a consequence of Prop. \ref{pr-31}, the maximum principle holds, as well as, the uniqueness of the solutions to the Dirichlet problem (\cite[Sect. 10]{gt}). From the examples of spacelike planes, we deduce the following  uniqueness result.

\begin{corollary} \label{c-32}
Let $u\in C^2(\Omega)\cap C(\overline{\Omega})$ be a solution to \eqref{eqs2} defined on a bounded domain $\Omega$.
\begin{enumerate}
\item If $u=c$ along $\partial\Omega$,  then $u=c$ on $\Omega$.
\item Suppose that $\Omega$ is doubly-connected with $\partial\Omega=\Gamma_1\cup\Gamma_2$. If $u=c_i$ along $\Gamma_i$, $i=1,2$,  and $c_1\leq c_2$, then $c_1\leq u\leq c_2$ in $\Omega$.
\item If $u$ attains an interior local maximum or minimum, then $u$ is constant in $u$.
\end{enumerate}  
\end{corollary}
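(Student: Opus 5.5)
The plan is to apply the comparison (maximum) principle for quasilinear elliptic equations, \cite[Sect. 10]{gt}, which is available because of Prop. \ref{pr-31}. The comparison functions will be the simplest stationary surfaces: constant functions, which are horizontal spacelike planes and trivially solve \eqref{eqs2}.

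\textbf{Comparison principle.} Equation \eqref{eqs2} has the form $a^{ij}(Du)D_{ij}u=0$. The coefficients depend only on $Du$, not on $u$ or $x$, so there is no zeroth-order term. By Prop. \ref{pr-31} the operator is elliptic wherever the gradient is spacelike. Hence \cite[Theorem 10.1]{gt} gives the following: if $u,v\in C^2(\Omega)\cap C(\overline{\Omega})$ both satisfy \eqref{eqs2} and $u\le v$ on $\partial\Omega$, then $u\le v$ in $\Omega$.

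\textbf{Items (1) and (2).} For item (1), compare $u$ with the constant solution $v\equiv c$ in both directions. This gives $u\le c$ and $u\ge c$, so $u=c$.

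For item (2), let $\Omega$ be doubly connected with $u=c_1$ on $\Gamma_1$ and $u=c_2$ on $\Gamma_2$, where $c_1\le c_2$. Then $c_1\le u\le c_2$ on all of $\partial\Omega$. Comparing $u$ with the constant solutions $c_1$ and $c_2$ gives $c_1\le u\le c_2$ in $\Omega$.

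An alternative, more elementary route avoids \cite{gt}. Freeze the coefficients $a^{ij}(x):=a^{ij}(Du(x))$. Then $u$ solves a linear equation $Lu=0$ with no zeroth-order term, and $L$ is locally uniformly elliptic, since its eigenvalues are $1-|Du|^2$ and $1+3|Du|^2$. The weak maximum principle \cite[Theorem 3.1]{gt} for $L$ then gives $\inf_{\partial\Omega}u\le u\le \sup_{\partial\Omega}u$. This yields items (1) and (2) directly.

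\textbf{Item (3).} Use the same linearization together with the Hopf strong maximum principle \cite[Theorem 3.5]{gt}. The conclusion reads ``constant in $\Omega$''; the statement's ``in $u$'' is a typo.

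Suppose $u$ attains an interior local maximum at $p$. Choose a ball $B\ni p$ with $\overline{B}\subset\Omega$. On $\overline{B}$, $|Du|$ is bounded away from $1$, because $u\in C^2$ with $|Du|<1$ and $\overline{B}$ is compact. So $L$ is uniformly elliptic on $B$ with bounded coefficients. The strong maximum principle then makes $u$ constant on $B$.

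To pass from $B$ to all of $\Omega$, use connectedness. The set of points near which $u\equiv u(p)$ is open by construction. It is also closed in $\Omega$: at any limit point $q$, $u(q)=u(p)$ is a local maximum value, so the same ball argument applies at $q$. Hence $u$ is constant on $\Omega$. The minimum case follows the same way, or by applying the argument to $-u$, which again solves \eqref{eqs2}.

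\textbf{Main obstacle.} The only delicate step is the ellipticity constant in item (3). Uniform ellipticity holds only locally, because $|Du|$ may approach $1$ near $\partial\Omega$. Working on compactly contained balls, as above, handles this.
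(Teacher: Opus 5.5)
Items (1) and (2) are correct and follow the paper's route. The paper likewise just invokes the comparison principle of \cite[Sect.~10]{gt} against horizontal planes $v\equiv c$; for these $a^{ij}(Dv)=\delta^{ij}$, so the ellipticity hypothesis of Theorem 10.1 holds trivially. Your frozen-coefficient variant is also valid, and it needs less than you say: with no first-order term, the weak maximum principle only uses pointwise positive definiteness of $a^{ij}(Du(x))$, i.e.\ $|Du|<1$.

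The gap is in the propagation step of item (3). You take $A$ to be the set of points near which $u\equiv u(p)$, and you claim $A$ is closed because at a limit point $q$ ``$u(q)=u(p)$ is a local maximum value''. But $u(p)$ is a local maximum value only at $p$: the inequality $u\le u(p)$ is known on a neighbourhood of $p$, not of $q$. On the part of a neighbourhood of $q$ lying outside $A$, $u$ may a priori be larger than $u(p)$ (and also smaller). So $q$ need not be a local extremum, and the strong maximum principle cannot be reapplied there. Knowing $Du(q)=0$ and $D^2u(q)=0$ does not rescue this: a smooth function can vanish on a half-disc and change sign in every neighbourhood of a point on its edge.

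Going from ``constant on a ball'' to ``constant on $\Omega$'' is a unique continuation statement, and you must supply it. The function $w=u-u(p)$ solves the linear equation $a^{ij}(Du(x))D_{ij}w=0$ and vanishes on a ball. Its coefficients are $C^1$ (polynomials in $Du$, with $u\in C^2$) and locally uniformly elliptic. Weak unique continuation for elliptic equations with Lipschitz leading coefficients then gives $w\equiv 0$ on the connected set $\Omega$; real analyticity of solutions of this analytic elliptic equation would also do.

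If instead item (3) is read as the paper's appeal to the maximum principle suggests, namely that $u$ attains $\sup_\Omega u$ (or $\inf_\Omega u$) at an interior point, your argument works after one change. Replace $A$ by $\{x\in\Omega: u(x)=\sup_\Omega u\}$. This set is closed by continuity, and every point of it is a genuine maximum, so your ball argument shows it is open. In either reading, choose the ball $B$ around $p$ inside the neighbourhood where $u\le u(p)$, so that the maximum over $B$ is attained at the interior point $p$.
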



As we stated, and without   restriction on the class of admissible functions, global minimizers of $E$ are obtained when $u$ is a constant function. The corresponding stationary surfaces are open sets of horizontal planes, where   $E[u]=\mbox{area}(\Omega)$. If we impose constant boundary conditions, $u=c$, and  if $u$ is not constant, then   $u$ is  necessarily not $C^2$ at some points of $\Omega$. It is expected that $|Du|\to 1$ at the points where regularity is lost because the coefficient of $\Delta u$ in \eqref{eqs} tends to $0$.  Thus, at these points, the surface is tangent to a cone $\mathcal{K}$.

We finish this section by obtaining all stationary surfaces given by separation of variables. 

\begin{theorem}\label{t23}
 Linear functions are the only extremals of the form $u(x,y)=f(x)+g(y)$. In consequence, spacelike planes are the only  stationary surfaces given by separation of variables.
\end{theorem}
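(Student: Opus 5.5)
Substitute $u(x,y)=f(x)+g(y)$ into \eqref{eqs2}. Since $u_{xy}=0$, the mixed term drops out and the equation becomes
$$(1+3f'^2-g'^2)f''+(1-f'^2+3g'^2)g''=0,$$
with $f=f(x)$, $g=g(y)$. The aim is to show $f''=g''=0$. If $f''\equiv 0$ on an interval, then the equation reduces to $(1-f'^2+3g'^2)g''=0$. The coefficient is at least $1-f'^2>0$, because the spacelike condition $|Du|<1$ forces $f'^2<1$. Hence $g''=0$. Symmetrically, $g''\equiv0$ forces $f''=0$. So it suffices to rule out an open set where $f''\neq 0$ and $g''\neq 0$ simultaneously.

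On such a set, write $p=f'(x)$, $q=g'(y)$, $A=f''(x)$, $B=g''(y)$. Rearrange the equation as
$$(1+3p^2)A+(1+3q^2)B=q^2A+p^2B.$$
Dividing by $AB$ gives
$$\frac{1+3p^2}{B}+\frac{1+3q^2}{A}=\frac{q^2}{B}+\frac{p^2}{A}.$$
Since this still mixes the variables, I will instead differentiate the original equation directly. Differentiating in $x$ gives
$$6pA^2+(1+3p^2-q^2)f'''-2pAB=0,$$
and differentiating this in $y$ gives
$$-2qBf'''-2pAg''' \cdot 0\ldots$$
More carefully, the cleanest route is to apply $\partial_x\partial_y$ to the identity. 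Every term in the original equation is a product of a function of $x$ and a function of $y$: $(1+3p^2)A$, $-q^2A$, $(1-p^2)B$, $3q^2B$. Applying $\partial_x\partial_y$ kills the pure terms and leaves
$$-(A)'\,(q^2)'-(p^2)'\,(B)'=0,\qquad\text{i.e.}\qquad f'''\,qB+pA\,g'''=0.$$

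Separating this last relation gives $f'''/(pA)=-g'''/(qB)=c$, a constant, wherever $p,q\neq0$. Thus $(f'')'=c\,f'f''$, so $f''=\frac{c}{2}f'^2+a$, and similarly $g''=-\frac{c}{2}g'^2+b$. Substituting back into the equation gives a polynomial identity in the independent variables $p$ and $q$, which range over open intervals because $A,B\neq 0$. The identity is
$$(1+3p^2-q^2)\left(\tfrac{c}{2}p^2+a\right)+(1-p^2+3q^2)\left(-\tfrac{c}{2}q^2+b\right)=0.$$
Its $p^4$ coefficient is $\tfrac{3c}{2}$, so $c=0$. The $p^2$ coefficient then gives $3a-b=0$, the $q^2$ coefficient gives $-a+3b=0$, and the constant term gives $a+b=0$. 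Together these force $a=b=0$, hence $A=B=0$, a contradiction.

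Therefore $f''=g''=0$ everywhere. By continuity and connectedness this also covers points where $p$ or $q$ vanish, since the set where $f''\neq 0$ would be open and handled by the argument above. So $f$ and $g$ are affine and $u$ is linear, i.e. its graph is a spacelike plane.

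The main obstacle I expect is the case analysis at points where $p$ or $q$ vanish. Those cases are handled because the identity $f'''qB+pAg'''=0$ on open sets forces the same conclusion piecewise.
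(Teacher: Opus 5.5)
Your proof is correct and follows essentially the same route as the paper: cross-differentiate to get $f'''g'g''=-f'f''g'''$, separate with a constant $c$, integrate to $f''=\frac{c}{2}f'^2+a$ and $g''=-\frac{c}{2}g'^2+b$, then substitute back and compare coefficients. The differences are minor. You treat $c=0$ and $c\neq 0$ together in one coefficient comparison in the variables $(p,q)$, justified by $p,q$ sweeping open intervals where $f'',g''\neq 0$, whereas the paper handles $c=0$ separately with explicit quadratics. You also dispose of the degenerate case at the start, using $1-f'^2+3g'^2>0$, instead of at the end.
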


\begin{proof} Suppose that $u(x,y)=f(x)+g(y)$ is an extremal.  Let $I$ and $J$ be the domains of $f$ and $g$, respectively. 
Substituting this into \eqref{eqs2} gives 
\begin{equation}\label{fg}
(1-f'^2-g'^2)(f''+g'')+4(f'^2f''+g'^2 g'')=0.
\end{equation}
Differentiating first with respect to $x$, and then with respect to $y$, we obtain
\begin{equation}\label{fg2}
g'g''f'''=-f'f'' g'''.
\end{equation}
Suppose that there is $(x_0,y_0)$ such that \begin{equation}\label{f3}
(f'^2)'(x_0)(g'^2)'(y_0)\not=0,
\end{equation}
from which we will arrive at a contradiction.  Then, there is a constant $c\in\r$ such that 
$$\frac{f'''}{f'f''}=c=-\frac{g'''}{g'g''}$$
in an open set of $I\times J$ around $(x_0,y_0)$. 
We consider two cases:
\begin{enumerate}
\item Case $c=0$. Then $f'''=g'''=0$, which implies there exist constants $a_i,b_i$ ($1\leq i\leq 3$) such that
$$f(x)=a_1\frac{x^2}{2}+a_2x+a_3,\quad g(y)=b_1\frac{y^2}{2}+b_2y+b_3.$$
Moreover, $a_1b_1\not=0$ by \eqref{f3}.   Substituting these expressions into \eqref{fg} yields a polynomial equation in $x$ and $y$, whose  coefficients must vanish. The coefficients of $x^2$ and $y^2$ are $a_1^3(3a_1-b_1)$ and $b_1^2(3b_1-a_1)$, respectively. This implies $a_1=b_1=0$,   a contradiction. 
\item Case $c\not=0$. From \eqref{fg2},  there are constants $a_1, b_1\in\r$ such that 
$$f''=\frac{c}{2}f'^2+a_1,\quad g''=-\frac{c}{2}g'^2+b_1.$$
Substituting these into \eqref{fg} gives
$$ (6 a_1-2 b_1+c) f'^2-(2 a_1-6 b_1+c) g'^2+2 (a_1+b_1)+3 c f'^4-3 c g'^4 =0.$$
Viewing  this identity  as a polynomial equation in $f'$, all coefficients must vanish because $f'\not=0$. We see that the coefficient of $f'^4$ is  $3c$. This yields $c=0$, a contradiction. 
\end{enumerate}

As a consequence of these arguments, and from \eqref{f3}, we must have $(f'^2)' (g'^2)' =0$   identically in $I\times J$. Therefore, either $(f'^2)'=0$ or $(g'^2)'=0$. Without loss of generality,   assume $(f'^2)'=0$; hence, there is  $a\in\r$ such that $f(x)=ax+b$. Substituting this into \eqref{fg}, we obtain
$$(1-a^2+3g'^2)g''=0.$$
This implies $g''=0$ or $1-a^2+3g'^2=0$. In both cases, we conclude that $g$ is linear. This implies that both $f$ and $g$ are linear functions,  completing  the proof.
\end{proof}

The same argument proves the following result in the Euclidean space.

\begin{theorem}  Planes are the only stationary surfaces in $\r^3$ for the energy $E_1$, \eqref{e1}, of the form $u(x,y)=f(x)+g(y)$.  
\end{theorem}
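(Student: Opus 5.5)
The plan is to repeat the proof of Theorem \ref{t23}, now starting from the Euler-Lagrange equation \eqref{eqs3} with $\epsilon=1$, that is,
$$(1+|Du|^2)\Delta u-4D^2u(Du,Du)=0.$$
We substitute $u(x,y)=f(x)+g(y)$, so that $u_x=f'$, $u_y=g'$, $u_{xy}=0$. The equation becomes
$$(1-3f'^2+g'^2)f''+(1+f'^2-3g'^2)g''=0.$$
This is the analogue of \eqref{fg}. We apply $\partial_x\partial_y$. The only mixed terms are $f'^2g''$ and $g'^2f''$, so we get
$$(f'^2)'g'''+(g'^2)'f'''=0,\quad\text{i.e.}\quad f'f''g'''=-g'g''f''',$$
which is exactly \eqref{fg2}. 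As in Theorem \ref{t23}, we assume there is a point with $(f'^2)'(x_0)(g'^2)'(y_0)\neq0$ and aim for a contradiction. Near that point we may separate variables: $f'''/(f'f'')=c=-g'''/(g'g'')$ for some constant $c$.

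\emph{Case $c=0$.} Then $f$ and $g$ are quadratic, with $f''=a_1$, $g''=b_1$ and $a_1b_1\neq0$. Substituting and reading off the $x^2$ coefficient gives $a_1^2(b_1-3a_1)=0$; the $y^2$ coefficient gives $b_1^2(a_1-3b_1)=0$. Hence $b_1=3a_1$ and $a_1=3b_1$, which forces $a_1=b_1=0$, a contradiction.

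\emph{Case $c\neq0$.} Integrating gives $f''=\frac c2f'^2+a_1$ and $g''=-\frac c2g'^2+b_1$. Substituting yields a polynomial identity in $f'$ and $g'$; the term $-3f'^2f''$ contributes $-\frac{3c}{2}f'^4$. The point I must justify is why all coefficients vanish. Since $f''(x_0)\neq0$, the function $f'$ takes a whole interval of values while $g'$ is held fixed. So the identity is a polynomial in $f'$ vanishing on an interval, and its coefficients are zero. In particular $c=0$, again a contradiction.

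Therefore $(f'^2)'(g'^2)'\equiv0$ on $I\times J$, so one factor vanishes identically; say $(f'^2)'\equiv0$. By continuity $f'$ is constant, so $f=ax+b$. The reduced equation is then $(1+a^2-3g'^2)g''=0$. On the open set where $g''\neq0$ we get $g'^2=(1+a^2)/3$ constant, which forces $g''=0$ there, a contradiction. Hence $g''\equiv0$, $g$ is linear, and $u$ parametrizes a plane.

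The main obstacle is the same bookkeeping as in the Lorentzian proof: computing the relevant top coefficients correctly with the new signs, and justifying that a polynomial identity in $f'$ (and $g'$) must vanish coefficientwise. This is where the nondegeneracy assumption $f''(x_0)g''(y_0)\neq0$ is used. The sign change $\epsilon=1$ does not matter, because ellipticity was never used in the argument; only the algebraic form of \eqref{eqs3} enters.
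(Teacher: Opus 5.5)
Your proposal is correct and follows the paper's route: the paper proves this theorem simply by noting that the argument of Theorem \ref{t23} carries over, and your recomputation for $\epsilon=1$ checks out. This includes the reduced equation, the identity \eqref{fg2}, the coefficients $a_1^2(b_1-3a_1)$ and $b_1^2(a_1-3b_1)$, the $-\frac{3c}{2}f'^4$ term, and the final step where the factor $1+a^2-3g'^2$ can now vanish. Two of your justifications are slightly more careful than the paper's own wording: the coefficientwise vanishing follows because $f'$ sweeps an interval near $x_0$, and $g''\equiv 0$ still holds even when that factor vanishes.
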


\section{Maximizers of the energy}\label{s4}

In this section, we consider the problem of finding maximizers of the energy $E$. Note that, in principle, this problem is meaningful when recalling that in $\l^3$, for the area energy of a spacelike surface, the relevant problem is to find maximizers. 
Returning to the examples of cones and hyperbolic caps, we observe that one can construct sequences of  spacelike surfaces whose resistance diverges to infinity. 

Let 
 $$\mathcal{C}(\Omega)=\{u\colon\overline{\Omega}\to\r\colon \mbox{$u$ is  $C^1$-piecewise},  |Du|^2<1\}.$$

\begin{proposition}\label{pr-u}
The unconstrained maximum resistance problem has no solution in the class of functions $\mathcal{C}(B_R)$.
\end{proposition}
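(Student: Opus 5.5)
The plan is to show that $\sup_{u\in\mathcal{C}(B_R)}E[u]=+\infty$. Then no admissible function can attain the supremum, since every $u\in\mathcal{C}(B_R)$ has finite energy. The natural tools are the explicit families already computed in Section \ref{sec2}: the spacelike cones \eqref{cj} and the hyperbolic caps \eqref{hj}.

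\textbf{Step 1: membership in the class.} I check that the cones $u_\lambda(r)=-\lambda R+\lambda r$, for $\lambda\in(0,1)$, belong to $\mathcal{C}(B_R)$. They are $C^1$ away from the origin, hence $C^1$-piecewise on $\overline{B_R}$, and $|Du_\lambda|^2=\lambda^2<1$ everywhere except at the vertex. If one prefers smooth competitors, the hyperbolic caps of radius $\rho>0$ work as well. They are smooth on $\overline{B_R}$ and satisfy $|Du|^2=r^2/(\rho^2+r^2)\le R^2/(\rho^2+R^2)<1$. Both families also satisfy the boundary condition $u=0$ on $\partial B_R$, so the conclusion holds even in the constrained setting.

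\textbf{Step 2: divergence of the energy.} By the computations in Section \ref{sec2},
$$E[u_\lambda]=\frac{\pi R^2}{1-\lambda^2}\longrightarrow\infty \quad (\lambda\to 1),$$
and for the caps $E[u]=\pi R^2(2\rho^2+R^2)/(2\rho^2)\to\infty$ as $\rho\to 0$. Hence $\sup_{\mathcal{C}(B_R)}E=+\infty$.

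\textbf{Step 3: conclusion.} Suppose $u^*\in\mathcal{C}(B_R)$ were a maximizer. Then $E[u^*]$ is finite: the integrand $1/(1-|Du^*|^2)$ is bounded on each closed $C^1$ piece of $\overline{B_R}$, because $|Du^*|$ is continuous there and strictly less than $1$. Choosing $\lambda$ close to $1$ with $E[u_\lambda]>E[u^*]$ gives a contradiction.

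The only delicate point is this finiteness of $E[u^*]$. It requires reading the condition $|Du|^2<1$ as holding on the closed pieces, including their boundaries, so that $|Du^*|$ stays uniformly below $1$. If the condition were only imposed in the interior, $E[u^*]$ might already be $+\infty$, and one would argue instead that a functional equal to $+\infty$ is not a genuine solution. I would make this interpretation explicit in the proof.
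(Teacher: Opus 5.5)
Your proposal is correct and follows the paper's proof. The paper simply exhibits the spacelike cones $u_\lambda$ of \eqref{cj} as members of $\mathcal{C}(B_R)$ with $E[u_\lambda]=\pi R^2/(1-\lambda^2)\to\infty$ as $\lambda\to 1$, and notes right afterwards that the hyperbolic caps work too. Your extra points are sound refinements the paper leaves implicit: a genuine maximizer must have finite energy (given $|Du|<1$ on the closed pieces), and the smooth caps avoid any doubt about the cone's vertex.
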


\begin{proof} 
Let $u_\lambda$ be the function defined in \eqref{cj}. Then $u_\lambda\in \mathcal{C}(B_R)$, and $\lim_{\lambda\to 1}E[u_\lambda]=\infty$.
\end{proof}

One can also consider the hyperbolic caps   defined in \eqref{hj}. As $\rho\to 0$, the resistance tends to infinity.  

Proposition \ref{pr-u} also holds for any bounded domain $\Omega$ in $\r^2$ because it suffices to consider a small disk $B_R$ contained in $\Omega$.

In the Euclidean case, it is  natural to restrict the admissible functions to be bounded, satisfying  $0\leq u\leq M$ for some  constant $M$. Even within   this family, the infimum of the resistance is $0$. A difference from the Euclidean case is that  in $\l^3$ this   boundedness assumption must be refined, because the spacelike condition $|Du|<1$ imposes restrictions on the height of the function $u$. 

\begin{proposition} 
Let $\Omega\subset\r^2$ be a bounded domain. Let $u\in C^1(\overline{\Omega})$ be a function such that $u=0$ on $\partial\Omega$. If $|Du|<1$ on $\overline{\Omega}$, then 
$$|u(p)|<\mbox{dist}(p,\partial\Omega),\quad p\in\Omega.$$
In particular, if $\Omega=B_R$, then $|u|< R$ in $\overline{B_R}$.
\end{proposition}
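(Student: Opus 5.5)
The inequality is a Lipschitz estimate, and I will obtain it by integrating $Du$ along a segment from $p$ to a nearest boundary point. Fix $p\in\Omega$ and let $d=\mbox{dist}(p,\partial\Omega)$. Since $\partial\Omega$ is compact and nonempty, there is a point $q\in\partial\Omega$ with $|p-q|=d$. The open disk of radius $d$ centered at $p$ lies in $\Omega$. Hence the half-open segment $\sigma(t)=p+t(q-p)$, $t\in[0,1)$, lies in $\Omega$, and its endpoint $q=\sigma(1)$ lies in $\overline{\Omega}$. This choice of segment is the key point: for a non-convex domain an arbitrary segment from $p$ to the boundary might leave $\overline{\Omega}$, but the nearest-point segment never does.

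Next I apply the fundamental theorem of calculus to $h(t)=u(\sigma(t))$. This function is $C^1$ on $[0,1]$ because $u\in C^1(\overline{\Omega})$, and $h'(t)=\langle Du(\sigma(t)),q-p\rangle$. Using $u(q)=0$,
$$|u(p)|=|h(1)-h(0)|\leq\int_0^1 |Du(\sigma(t))|\,|q-p|\,dt = d\int_0^1|Du(\sigma(t))|\,dt.$$

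It remains to make the inequality strict. The function $t\mapsto |Du(\sigma(t))|$ is continuous on the compact interval $[0,1]$ and satisfies $|Du|<1$ there, because the hypothesis holds on all of $\overline{\Omega}$. It therefore attains a maximum $m<1$, so $|u(p)|\leq m\,d<d$. Even with the hypothesis only on $\Omega$, continuity together with $|Du|<1$ at $t=0$ would make the integral strictly less than $1$.

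For the particular case $\Omega=B_R$, we have $\mbox{dist}(p,\partial B_R)=R-|p|\leq R$, so $|u(p)|<R$ for $p\in B_R$. On $\partial B_R$ we have $u=0<R$. Together these give $|u|<R$ on $\overline{B_R}$.
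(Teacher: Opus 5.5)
Your proof is correct and is essentially the paper's argument: both restrict $u$ to the segment from $p$ to a nearest boundary point $q$ and bound the increment by $|Du|\,|q-p|$. The paper uses the mean value theorem, so strictness comes directly from $|Du(\alpha(t_0))|<1$ at a single point, whereas you integrate and use compactness. Your inscribed-disk argument that the segment stays in $\overline{\Omega}$ also justifies a step the paper only asserts.
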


\begin{proof}
Let $p\in\Omega$ and  let $q\in\partial\Omega$ be such that $\mbox{dist}(p,\partial\Omega)=d(p,q)$. Since the segment joining $p$ and $q$ is included in $\overline{\Omega}$, we define the function $f(t)=u(\alpha(t))$, where $\alpha(t)=(1-t)q+tp$, $t\in [0,1]$. By the mean value theorem, $f(1)-f(0)=f'(t_0)$ for some $t_0\in[0,1]$. Then, we have 
$$|u(p)|=|f(1)-f(0)|=|\langle Du(\alpha(t_0)),\alpha'(t_0)\rangle<|\alpha'(t_0)|=\mbox{dist}(p,\partial\Omega).$$
\end{proof}

If $\Omega=B_R$, we need to  restrict the class to functions that do not attain the value $R$.  Given $M$, with $0<M<R$, define 
$$\mathcal{C}_M(B_R)=\{u\in \mathcal{C}(B_R)\colon   0\leq u\leq M, |Du|^2<1\}.$$
If $u$ attains a maximum of $E$ in $\mathcal{C}_M(B_R)$, then we prove that $u$ necessarily satisfies the boundary conditions $u(0)=0$ and $u(R)=M$. 

\begin{proposition} Let $u\in \mathcal{C}_M(B_R)$ be a function that attains a local maximum of $E$ in the class of functions $\mathcal{C}_M(B_R)$. Then, up to a reflection across a horizontal plane of $\r^3$, we have 
$$u(0)=0,\quad u(R)=M.$$
\end{proposition}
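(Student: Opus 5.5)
The plan is a perturbation argument by contradiction. Here $u$ is understood as radial, $u=u(r)$, so the energy is given by \eqref{eq2}; I treat the general case the same way along rays, or via a level-set dilation. The integrand $r/(1-u'^2)$ is strictly increasing in $|u'|$. So if the range of $u$ does not fill $[0,M]$, I will steepen $u$ uniformly and strictly increase $E$ while staying in $\mathcal{C}_M(B_R)$. Set $m_0=\min u$ and $m_1=\max u$ on $\overline{B_R}$, so that $0\le m_0\le m_1\le M$.

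First I dispose of the constant case. If $u\equiv c$, then for small $\lambda>0$ the function $c+\lambda(r-R/2)$, truncated to $[0,M]$ if needed, is still admissible. This uses $0<M$ to leave room to move. Its energy exceeds $\pi R^2=E[u]$, and it converges to $u$ in $C^0$, so $u$ is not a local maximum. This even holds in a $C^1$-type topology, and I will state which topology "local maximum" refers to: I will take uniform closeness of $u$ and $Du$.

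In the nonconstant case I use the affine rescaling
\[
v_t=m_0+(1+t)(u-m_0),\qquad t>0 \text{ small}.
\]
If $m_1<M$, then $v_t$ takes values in $[m_0,\,m_0+(1+t)(m_1-m_0)]\subset[0,M]$ for small $t$. The condition $|Dv_t|=(1+t)|Du|<1$ also holds for small $t$, because $\sup|Du|<1$: $u$ is $C^1$-piecewise on the compact set $\overline{B_R}$. Moreover $E[v_t]>E[u]$ because $Du\not\equiv0$ on a set of positive measure. Hence $m_1=M$. If instead $m_0>0$, I apply the same construction to $M-u$, using the vertical shift $u\mapsto u-\epsilon$ combined with rescaling about $m_1$, and conclude $m_0=0$. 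Thus the range of $u$ is all of $[0,M]$.

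Next I locate the extreme values at $r=0$ and $r=R$. I aim to show that $u$ is monotone in $r$ and then use the reflection $z\mapsto M-z$ to assume $u$ is increasing, which gives $u(0)=0$ and $u(R)=M$. For monotonicity, suppose $u$ has an interior local extremum in $r$, say a local max at $r_1\in(0,R)$ that is not a global max, or more generally $u$ is not monotone. Then on some subinterval I replace $u$ by the reflected or "unfolded" profile, continuing with slope $|u'|$ but in the monotone direction. This keeps $|u'|$ pointwise the same, and afterwards I rescale as above to gain strict energy. Equivalently, I use the monotone rearrangement $u^*(r)$ that has the same distribution of $|u'|$ weighted by $r$. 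The main obstacle is exactly here: a rearrangement preserving $\int r/(1-u'^2)$ must respect the weight $r$, and unfolding could push values outside $[0,M]$. My first attempt is to argue that after unfolding the range strictly exceeds $[0,M]$ only if $u$ was nonmonotone. I would then compress vertically, which costs energy, and compare instead with a steepening of the monotone part only, which gains energy. If the bookkeeping fails, I fall back on the local perturbation near the interior extremum, lowering the peak by a small cut with slope close to $1$. I expect the weighted comparison of energies to be the delicate step.
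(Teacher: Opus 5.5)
Your first two steps are sound. In one respect they improve on the paper: rescaling about $\min u$ and $\max u$, via $m_0+(1+t)(u-m_0)$ and $m_1+(1+t)(u-m_1)$, gives competitors that converge to $u$ in $C^1$. The paper's $v_\epsilon=(1+\epsilon)(u-u(0))$ instead tends to $u-u(0)\neq u$, so it only contradicts global maximality.

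But these steps only show that the range of $u$ is $[0,M]$. They say nothing about where $0$ and $M$ are attained, and the monotonicity step you need for that is left open, so the proposal does not yet prove the statement. Your listed tools do not close it:
\begin{itemize}
\item Unfolding fails: once the range is $[0,M]$, any monotone profile with the same $|u'|$ has oscillation $\int_0^R|u'|\,dr$, which exceeds $M$ when $u$ is not monotone. So it always leaves the class, and the compensating energy comparison is not carried out.
\item The fallback notch with slope close to $1$ changes $Du$ by an amount of order one. It is therefore not a small perturbation in the $C^1$ topology you committed to, and contradicts nothing.
\end{itemize}
The paper does not resolve this either: it simply asserts that $v_\epsilon$ is increasing, i.e.\ it takes monotonicity of $u$ for granted.

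The missing idea is strict convexity of the integrand in the gradient, and it makes the monotonicity question moot. For $f(p)=(1-|p|^2)^{-1}$ on $|p|<1$,
\[
D^2f(p)=\frac{2}{(1-|p|^2)^2}\,I+\frac{8}{(1-|p|^2)^3}\,p\otimes p>0 .
\]
In the radial setting this is exactly the paper's $F_{pp}>0$. Now argue as follows.
\begin{itemize}
\item If $u\in\mathcal{C}_M(B_R)$ is not identically $0$ or $M$, the set $U=\{0<u<M\}$ is open and nonempty. Take $\phi\in C^\infty_c(U)$ with $\phi\not\equiv 0$, radial if you work in the radial class.
\item On $\mathrm{supp}\,\phi$, $u$ is bounded away from $0$ and $M$, and $\sup|Du|<1$. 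Hence both $u\pm t\phi$ lie in $\mathcal{C}_M(B_R)$ for small $|t|$, and they tend to $u$ in $C^1$.
\item Strict convexity gives $E[u+t\phi]+E[u-t\phi]>2E[u]$, so one of the two beats $u$.
\item Your one-sided constant-case argument handles $u\equiv 0$ and $u\equiv M$.
\end{itemize}
Together these show that $E$ has no local maximizer in $\mathcal{C}_M(B_R)$ at all, consistent with Proposition~\ref{pr14}. The proposition therefore holds vacuously, and no rearrangement argument is needed.
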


\begin{proof}
After a reflection across the plane $z=0$,  we can assume that $u(0)<u(R)$. Suppose that $u\in \mathcal{C}_M(B_R)$ such that $u(0)>0$. For $\epsilon>0$, define  the function 
$$v_\epsilon(r)=(1+\epsilon )(u(r)-u(0)).$$
It is clear that $v_\epsilon(0)=0$. We prove that, by choosing a suitable $\epsilon>0$, we have $v_\epsilon\in \mathcal{C}_M(B_R)$,  and 
$E[v_\epsilon]>E[u]$. Let $\delta=\frac{u(R)-u(0)}{M}<1$ and $s_M=\max_{r\in [0,R]}u'(r)<1$. Then $v_\epsilon$ is increasing with $v'_\epsilon=(1+\epsilon)u'(r)\leq (1+\epsilon)s_M$. The spacelike condition for $v_\epsilon$ requires $1+\epsilon<\frac{1}{s_M}$. Moreover, $v_\epsilon(R)=(1+\epsilon)\delta M$. Thus, we also require $1+\epsilon\leq \frac{1}{\delta}$. For any $\epsilon>0$ satisfying both conditions, we have $v_\epsilon\in \mathcal{C}_M(B_R)$ with $v_\epsilon(0)=0$ and 
$E[v_\epsilon]>E[u]$. This leads to a contradiction, proving $u(0)=0$.

Similarly, if $u\in \mathcal{C}_M(B_R)$ is such that $u(R)<M$, we can define, for $\epsilon>0$, the function 
$$w_\epsilon(r)=(1+\epsilon)(u(r)-u(R))+M.$$
Then, with the same notation for $\delta$ and $s_M$, we have $w_\epsilon(R)=M$, $w_\epsilon\in \mathcal{C}_M(B_R)$ and $E[w_\epsilon]>E[u]$. This would imply that $u$ is not a local maximizer, a contradiction. This proves that $u(R)=M$.
\end{proof}

\begin{proposition} \label{pr14}
The maximum of the functional $E$ in $\mathcal{C}_M(B_R)$ is infinite.
\end{proposition}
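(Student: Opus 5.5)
We exhibit a sequence $u_n\in\mathcal{C}_M(B_R)$ with $E[u_n]\to\infty$. Then the supremum of $E$ over $\mathcal{C}_M(B_R)$ is $+\infty$ and no maximizer exists. The idea is the same as in Proposition \ref{pr-u}, but now the height constraint $0\leq u\leq M$ must be respected. The fixed disk of radius $R$ is not needed to carry the singular behaviour; it suffices to make the integrand $1/(1-|Du|^2)$ blow up on a set of fixed positive area while the total height gain stays at most $M$. Since $M>0$ is fixed, we cannot use a single steep cone over all of $B_R$ (its height $\lambda R$ may exceed $M$). Instead, we use radial profiles that oscillate.

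\textbf{Step 1 (construction).} Fix $\lambda\in(0,1)$ close to $1$. Consider a radial sawtooth profile $u(r)$ on $[0,R]$ whose slope alternates between $+\lambda$ and $-\lambda$ on consecutive intervals of length $h$, with $h$ chosen so that $\lambda h\leq M$. Then $0\leq u\leq \lambda h\leq M$. The function is $C^1$-piecewise and satisfies $|u'|=\lambda<1$ away from finitely many circles. Hence $u\in\mathcal{C}_M(B_R)$, the corners being allowed by the piecewise $C^1$ class. To avoid a singular vertex at the origin, set $u\equiv 0$ on a small disk $B_{r_0}$ and start the sawtooth at $r_0$.

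\textbf{Step 2 (energy estimate).} By \eqref{eq2},
\[
E[u]=2\pi\int_0^{r_0} r\,dr+2\pi\int_{r_0}^{R}\frac{r}{1-\lambda^2}\,dr=\pi r_0^2+\frac{\pi(R^2-r_0^2)}{1-\lambda^2}.
\]
Letting $\lambda\to1$, with $h$ taken as $M/\lambda$ or smaller (the number of teeth grows only mildly), gives $E[u_\lambda]\to\infty$. This proves the statement.

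\textbf{Main obstacle.} The only delicate point is checking that the oscillating profile genuinely lies in the admissible class. The class $\mathcal{C}_M(B_R)$ has to accept piecewise $C^1$ functions with finitely many corner circles, and the strict inequality $|Du|^2<1$ has to hold wherever $Du$ exists. The constraint $0\leq u\leq M$ is automatic from the choice of $h$. If one also insists on the boundary normalization of the preceding proposition, $u(0)=0$ and $u(R)=M$, then the last tooth should be adjusted to end exactly at height $M$. For instance, choose $h$ so that $(R-r_0)/h$ is an odd integer and the teeth have height exactly $M$. This does not affect the blow-up of the estimate in Step 2.
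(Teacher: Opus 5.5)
Your proof is correct, and the admissibility issue you flag is no worse than in the paper, whose own competitors also have a corner circle and a conical vertex at the origin. The underlying idea is the same as the paper's: explicit radial piecewise-linear profiles whose slope tends to $1$. What differs is how the height bound $0\leq u\leq M$ is respected.

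The paper uses a single monotone ramp over a shrinking disk. With $a_n=M+\frac{R-M}{n}$ it takes $u_n(r)=\frac{M}{a_n}r$ on $[0,a_n]$ and $u_n\equiv M$ on $[a_n,R]$. The slope $M/a_n\to 1$ while the height stays exactly $M$, and $E[u_n]\geq \pi a_n^4/(a_n^2-M^2)\to\infty$. You instead keep the whole annulus $r_0<r<R$ steep and fold the ramp into a sawtooth.

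\emph{What the paper's route buys.} It gives a monotone competitor with a single kink. It already satisfies $u(0)=0$ and $u(R)=M$, the normalization of the preceding proposition, so no adjustment is needed. Your normalized variant with a single interval ($r_0=R-M/\lambda$) is in fact the paper's construction with the flat and steep parts interchanged.

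\emph{What your route buys.} It decouples slope from amplitude. At slope $\lambda$ you get the exact value $\pi r_0^2+\pi(R^2-r_0^2)/(1-\lambda^2)$, which is independent of $M$. At the same slope, the paper's steep region has area only $\pi M^2/\lambda^2$, so its lower bound $\pi M^2/(\lambda^2(1-\lambda^2))$ degrades as $M\to 0$. The price is a non-monotone profile with several corner circles.
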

\begin{proof}
For each $n\in\mathbb{N}$, define $a_n=M+\frac{R-M}{n}$ and consider the $C^1$-piecewise  function 
$$u_n(r)=\left\{\begin{array}{ll}
\frac{M}{a_n}r,& 0\leq r\leq a_n,\\
M,& a_n\leq r\leq R.
\end{array}
\right.$$
Then $u_n$ is spacelike and $0\leq u_n\leq M$ in its domain. Observe that $u_n$ describes a spacelike cone that tends  to the lightlike cone $\mathcal{K}^+$ in $(0,M)$  as $n\to\infty$. We have  
\begin{equation*}
\begin{split}E[u_n]&=2\pi\int_0^R\frac{r}{1-u_n'^2}\, dr=2\pi\frac{a_n^2}{a_n^2-M^2}\int_0^{a_n}r\, dr=\pi\frac{a_n^4}{a_n^2-M^2}\\
&=-\frac{(M (n-1)+R)^4}{2 n^2 (M-R) (M (2 n-1)+R)}.
\end{split}
\end{equation*}
Hence $\lim_{n\to\infty}E[u_n]=\infty$. 
\end{proof}

\begin{corollary} Let $\Omega$ be a domain and   $R>0$   such that   $B_R\subset\Omega$. If $0<M<R$, define 
$$\mathcal{C}_M(\Omega)=\{u\in C^1(\Omega)\colon   0\leq u\leq M\}.$$
Then the maximum of the functional $E$ on $\mathcal{C}_M(\Omega)$ is infinite.
\end{corollary}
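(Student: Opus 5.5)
The plan is to reduce to Proposition \ref{pr14} by transplanting its maximizing sequence into $\Omega$. After a translation of the $xy$-plane we may assume that $B_R$ is centered at the origin; translations are rigid motions and leave $E$ unchanged. Let $u_n$ be the radial functions constructed in the proof of Proposition \ref{pr14}. Each $u_n$ equals the constant $M$ on the annulus $a_n\leq r\leq R$, so it extends to all of $\Omega$ by setting $\tilde u_n=M$ on $\Omega\setminus B_R$. The extension is continuous and piecewise $C^1$, spacelike with $|D\tilde u_n|<1$, and satisfies $0\leq \tilde u_n\leq M$.

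To compare energies, note that the integrand $1/(1-|D\tilde u_n|^2)$ is positive, and on $\Omega\setminus B_R$ it equals $1$. Hence
$$E[\tilde u_n]\geq E[u_n]=\pi\frac{a_n^4}{a_n^2-M^2}\to\infty,$$
where $E[u_n]$ denotes the energy of $u_n$ over $B_R$. If $\Omega$ has infinite area, then $E[\tilde u_n]$ is already infinite. For a bounded $\Omega$, which is the setting of the definition of $E$, the value is finite but unbounded in $n$. Either way the supremum over $\mathcal{C}_M(\Omega)$ is $+\infty$.

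The main obstacle is a regularity mismatch. The class $\mathcal{C}_M(\Omega)$ as written requires $C^1$, whereas $\tilde u_n$ is only piecewise $C^1$: it has a conical vertex at the origin and a corner along the circle $r=a_n$.

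\begin{enumerate}
\item The first option is to read the class, as elsewhere in the paper, as consisting of piecewise $C^1$ spacelike functions. Then nothing more is needed.
\item If genuine $C^1$ is required, I would mollify $u_n$ as a function of $r$. Near $r=0$, replace the cone by a hyperbolic cap of small radius $\rho$. Near $r=a_n$, round the corner by a convex $C^1$ transition whose slope stays in $[0,M/a_n]$. The modified function keeps $0\leq u\leq M$ and $|Du|\leq M/a_n<1$. Since the integrand is bounded by $a_n^2/(a_n^2-M^2)$ and the modification is supported on a set of small area, the energy changes by an arbitrarily small amount. Choosing the smoothing so that the energy is at least $E[u_n]-1$ then gives a $C^1$ sequence whose energy still diverges.
\end{enumerate}

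The spacelike condition $|Du|<1$ is implicit in the definition of $E$, and both constructions preserve it.
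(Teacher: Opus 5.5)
Your argument is correct and is the same reduction the paper uses: the paper simply asserts $\mathcal{C}_M(B_R)\subset\mathcal{C}_M(\Omega)$ and invokes Proposition~\ref{pr14}. Your extension of $u_n$ by the constant $M$ off $B_{a_n}$, the positivity of the integrand, and the smoothing of the vertex and of the corner at $r=a_n$ are exactly what make that one-line inclusion legitimate. There are two harmless slips: the formula $E[u_n]=\pi a_n^4/(a_n^2-M^2)$, taken from the paper, drops the annulus contribution $\pi(R^2-a_n^2)$, which only strengthens your lower bound; and the rounding at $r=a_n$ must be concave rather than convex, since the slope decreases from $M/a_n$ to $0$.
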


\begin{proof}
Since $\mathcal{C}_M(B_R)\subset \mathcal{C}_M(\Omega)$, the result follows immediately from Prop. \ref{pr14}.
\end{proof}

\begin{example} We can find   examples of  hyperbolic caps where the energy diverges. Indeed, let $a_n$ be as in the previous proof. Let 
$$\rho_n=\frac{a_n^2-M^2}{2M},$$
and $a_n=M+\frac{R-M}{n}$. Define 
 $$u_{n}(r)=\left\{\begin{array}{ll} -\rho_n+\sqrt{\rho_n^2+r^2},& 0\leq r\leq a_n\\
M,& a_n\leq r\leq  R.
\end{array}\right.$$
Then $u_n(0)=0$, $u_n(a_n)=M$ and $0\leq u_n\leq M$.  The computation of the resistance gives 
$$E[u_n]=\frac{\pi a_n^2}{2\rho_n^2}(a_n^2+2\rho_n^2)\to \infty$$
as $n\to 0$, because $\rho_n\to 0$ and $a_n\to M$.
Note that if $n\to\infty$, then $u_n$ tends to the lightlike cone $\mathcal{K}^+$ in the interval $[0,M]$.
\end{example}

\section{The radial case}\label{s5}

We study the radial solutions of the Euler-Lagrange equation \eqref{eqs}, or equivalently,   stationary surfaces  which are surfaces of revolution about the $z$-axis. Since $S$ is spacelike, then $S$ is a locally graph over $\r^2$. If, for instance, we are looking for rotational spacelike surfaces defined on $B_R$, then $u$ is a global graph on $B_R$, and thus, $u$ can be written as $u=u(r)$. The surface $S$   can be described by its generating curve $\gamma(r)=(r,0,u(r))$, where 
$r\in [a,b]$, with  $a\geq 0$. Hence, $S=\{(r\cos t, r\sin t,u(r))\colon a\leq r\leq b,t\in\r\}$. The Lagrangian of the energy $E$ is 
\begin{equation}\label{ff}
F(r,u,u')=\frac{r}{1-u'^2}.
\end{equation}
Since $F$ does not depend explicitly on $u$, the Euler-Lagrange equation is  
$$\frac{d}{dr}\frac{ru'(r)}{(1-u'(r)^2)^2}=0.$$
Therefore, there exists  a constant $c_1\in\r$ such that 
\begin{equation}\label{r1}
\frac{ru'(r)}{(1-u'(r)^2)^2}=c_1.
\end{equation}
 If $c_1=0$, then $u(r)=c$ for some constant $c\in\r$, and $S$ is a horizontal spacelike plane. Suppose now $c_1\not=0$. Without loss of generality, from now on we assume $c_1>0$.  Equivalently, we have $u'>0$ in its entire domain due to \eqref{r1}. This proves that $u(r)$ is strictly increasing in its domain. Let $p=u'$ be the new independent variable.   Then \eqref{r1} becomes
\begin{equation}\label{r2}
r= \frac{c_1}{p}(1-p^2)^2.
\end{equation}
In particular, $r(p)$ is a decreasing function of $p$. 
We now find a parametric representation of $\gamma$ in terms of the variable $p$. Differentiating   \eqref{r2} gives 
$$\frac{dr}{dp}=c_1 (-\frac{1}{p^2}-2+3p^2).$$
Then
$$u=\int p\, dr= \int c_1 p\left(-\frac{1}{p^2}-2+3p^2\right)dp=c_2- c_1 \left(\log(p)+p^2-\frac{3p^4}{4}\right),$$
where we use  that fact that $p>0$. 
Thus, we  have $\gamma(p)=(r(p),u(p))$, where 
\begin{eqnarray}
r(p)&=&\frac{c_1}{p}(1-p^2)^2,\label{p1}\\
u(p)&=&c_2-c_1\left(\log(p)+p^2-\frac{3p^4}{4}\right).\label{p2}
\end{eqnarray}
We study the existence of radial extremals of $E$ parametrized by \eqref{p1}-\eqref{p2}. The first examples are horizontal planar disks $u(r)=c$, $c\in\r$. 

We now suppose that $u$ is not constant. Since at $r=R$ the function is constant, and if we are looking for radial solutions defined on $B_R$, then Cor. \ref{c-32} implies that $u(r)$  must lose  $C^2$-regularity at some point of $(0,R)$. We will see that, indeed, the solution $u(r)$ presents a singularity ($u'=1$) at  $r=0$. The following result proves the existence of rotational stationary surfaces defined in punctured disks around the origin. We also give  a full description of the geometric properties of the radial extremals.  

\begin{theorem}\label{t51}
 Let $M$ and $R$ be positive numbers with $M<R$. Then there is a unique extremal of $E$ of radial type, $u=u(r)$, given as a solution $u \colon (0,R]\to\r$  of \eqref{r1} such that 
\begin{equation}\label{c1}\lim_{r\to 0}u(r)=0,\quad u(R)=M.
\end{equation}
Moreover: 
\begin{enumerate}
\item  The function $u$ is strictly increasing and $0< u(r)\leq M$ for all $r\in (0,R]$.
\item The function $u$ can be extended to   $r=0$, with the property 
$$\lim_{r\to 0}u'(r)=1.$$
\item The curve $\gamma $ is regular   on its  domain.
\item The function $u(r)$ can be extended to $(0,\infty)$ while maintaining  the spacelike condition $u'^2<1$. Moreover, 
$$\lim_{r\to \infty}u(r)=\infty,\quad  \lim_{r\to \infty}u'(r)=0.$$
\item The function $u(r)$ is concave. 
\end{enumerate} 
\end{theorem}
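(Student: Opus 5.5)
The plan is to work entirely with the parametrization \eqref{p1}--\eqref{p2} for $c_1>0$, taking the slope $p=u'\in(0,1)$ as parameter. By \eqref{r2} every nonconstant radial solution of \eqref{r1} is of this form.

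First I record the behaviour of $r(p)=\frac{c_1}{p}(1-p^2)^2$. Its derivative is
$$\frac{dr}{dp}=-\frac{c_1(1-p^2)(1+3p^2)}{p^2}<0 \quad\text{on } (0,1).$$
Hence $r$ is a decreasing diffeomorphism from $(0,1)$ onto $(0,\infty)$, with $r\to\infty$ as $p\to 0$ and $r\to 0$ as $p\to 1$. Consequently $p$ is a smooth function of $r\in(0,\infty)$, and
$$u''(r)=\frac{dp}{dr}<0,\qquad 0<u'(r)<1,\qquad \lim_{r\to 0}u'=1,\qquad \lim_{r\to\infty}u'=0.$$
This already gives concavity (item 5), the spacelike extension to $(0,\infty)$ and the limits of $u'$ in items 2 and 4. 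Item 3 follows because $\gamma'(p)=(r'(p),p\,r'(p))$ and $r'(p)\neq 0$.

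Next I fix $c_2$ from the condition $\lim_{r\to 0}u=0$, which reads $\lim_{p\to1}u(p)=0$. From \eqref{p2} this limit equals $c_2-c_1/4$, so $c_2=c_1/4$ and
$$u=-c_1\varphi(p),\qquad \varphi(p)=\log p+p^2-\tfrac34p^4-\tfrac14 .$$
Here $\varphi'(p)=(1-p^2)(1+3p^2)/p>0$ and $\varphi(1)=0$, so $\varphi<0$ on $(0,1)$. Hence $u>0$, and $u$ increases with $r$ (equivalently, it decreases in $p$). Also $u\to\infty$ as $p\to0$, because of the $-\log p$ term; this completes item 4. Item 1 will follow once $u(R)=M$ is arranged. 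The extension to $r=0$ with $u(0)=0$ and $u'(0^+)=1$ is item 2.

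The main point is existence and uniqueness of $c_1$ with $u(R)=M$. Rather than studying the explicit ratio $-p\varphi(p)/(1-p^2)^2$, I will use the scaling of Proposition~\ref{pr-22}. Let $U$ denote the solution with $c_1=1$, $c_2=1/4$. Then the solution with constant $c_1$ is $u(r)=c_1U(r/c_1)$, since both coordinates in \eqref{p1}--\eqref{p2} (with $c_2=c_1/4$) scale linearly in $c_1$. With $s=R/c_1$, the condition $u(R)=M$ becomes
$$\frac{U(s)}{s}=\frac{M}{R}.$$
Since $U$ is strictly concave with $U(0^+)=0$, the quotient $U(s)/s$ is strictly decreasing on $(0,\infty)$. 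By $U'(0^+)=1$ its limit at $0$ is $1$. As $s\to\infty$ it tends to $0$, by L'Hôpital and $U'\to0$. Therefore $U(s)/s$ is a bijection $(0,\infty)\to(0,1)$. For $0<M<R$ there is exactly one $s$, hence exactly one $c_1=R/s$, and the extremal exists and is unique.

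The only delicate step is the monotonicity of $U(s)/s$, and it is handled by concavity. If needed, one can double check the endpoint limits by expanding $\varphi(p)\approx-4(1-p)^2$ near $p=1$.
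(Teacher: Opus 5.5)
Your proof is correct. It follows the paper's skeleton: parametrize by $p=u'$, get $c_2=c_1/4$ from $\lim_{r\to 0}u=0$, then reduce $u(R)=M$ to a scalar equation. The difference is in how you handle the decisive existence/uniqueness step.

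The paper sets $r(p_R)=R$ and rewrites the boundary condition as $g(p_R)=M/R$, where $g(x)=\frac{x}{(1-x^2)^2}\bigl(\frac14-\log x-x^2+\frac34x^4\bigr)$. It then simply asserts, without proof, that $g$ increases strictly from $0$ to $1$. Your quotient is the same function: with $s=(1-p^2)^2/p$ one has $U(s)/s=g(p)$, and $s$ is decreasing in $p$. So your chord-slope argument actually proves the monotonicity of $g$ that the paper leaves unjustified. That argument uses strict concavity of $U$ with $U(0^+)=0$, gets the value $1$ at $s\to0$ from $U'(0^+)=1$, and gets $0$ at $s\to\infty$ from $U'\to 0$ via L'H\^opital.

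What makes this work is that you establish concavity first, for every $c_1>0$, directly from $u''(r)=1/r'(p)<0$. This is also cleaner than the paper's quotient-rule computation of $u''$. The formula displayed there is not right as written; the correct value is $u''(r)=-p^2/\bigl(c_1(1-p^2)(1+3p^2)\bigr)$, which has the same sign, so the paper's conclusion stands.

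In turn, the paper's formulation gives an explicit scalar equation for $p_R$, which it uses for the numerics in Table~\ref{table1}.

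For the uniqueness claim you should add a one-line exclusion of $c_1\le 0$. If $c_1=0$, $u$ is constant. If $c_1<0$, then $u'<0$ throughout, so $u(R)<\lim_{r\to0}u=0<M$.
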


\begin{proof}
Based on \eqref{r1} and \eqref{r2}, the function $u$ can be defined with the variable $p=u'$ according to \eqref{p1} and \eqref{p2}. We find the values of $c_1$ and $c_2$ that satisfy the boundary conditions \eqref{c1}. 

From \eqref{p1}, $u(r)$ is defined up to  $r=0$ as $p\to 1$. If we require $u(0)=0$,  from \eqref{p2} we deduce $c_2=c_1/4$. Replacing this in \eqref{p2}, we obtain 
\begin{equation}\label{p3}
u(p)=c_1\left(\frac14-\log(p)-p^2+\frac{3p^4}{4}\right).
\end{equation}
We now find the value of $p$ such that $u(R)=M$, which  is the second condition in \eqref{c1}. The function   $r\colon (0,1]\to\r$, $r=r(p)$, is  strictly decreasing with 
$$\lim_{p\to 0}r(p)=\infty,\quad \lim_{p\to 1}r(p)=0.$$
Thus,   the range of $r(p)$ is $[0,\infty)$. This proves that $u$ can be extended to $(0,\infty)$ satisfying the condition $u'(r)<1$ (item 4). From \eqref{p1}, let $p_R$ be the unique value such that 
$r(p_R)=R$. This yields 
\begin{equation}\label{c111}
c_1=R\frac{p_R}{(1-p_R^2)^2},
\end{equation}
where the value of $p_R$ needs to be determined.  Now \eqref{p3} becomes at $p=p_R$, 
$$u(p_R)=\frac{  R\, p_R}{(1-p_R^2)^2}\left(\frac14-\log(p_R)-p_R^2+\frac{3p_R^4}{4}\right).$$
As we require   $u(p_R)=M$, then  
\begin{equation}\label{c2}
\frac{p_R}{(1-p_R^2)^2}\left(\frac14-\log(p_R)-p_R^2+\frac{3p_R^4}{4}\right)=\frac{M}{R}.
\end{equation}
Define the function  
$$g\colon (0,1)\to\r,\quad g(x)=\frac{x}{(1-x^2)^2}\left(\frac14-\log(x)-x^2+\frac{3x^4}{4}\right).$$
Thus, the condition \eqref{c2} is fulfilled if we find a solution to 
\begin{equation}\label{c22}
g(x)=\frac{M}{R},\end{equation}
where we know $\frac{M}{R}\in (0,1)$.
The function $g(x)$ is strictly increasing with 
$$\lim_{x\to 0}g(x)=0,\quad \lim_{x\to 1}g(x)=1.$$
 This proves the existence of a unique solution  $p_R$ to \eqref{c22}.  Moreover, since $p=u'>0$, the function $u(r)$ is strictly increasing, proving the item (1).

 We prove (2). By the chain rule,  
$$\lim_{r\to 0}u'(r)=\lim_{p\to 1}\frac{u'(p)}{r'(p)}=\lim_{p\to 1}p= 1.$$

We study the regularity of $\gamma$. From \eqref{p1} and \eqref{p2}, we have   
\begin{equation}\label{p5}
\begin{split}
r'(p)&=-c_1(\frac{1}{p^2}+2-3p^2),\\
u'(p)&=-c_1p(\frac{1}{p^2}+2-3p^2).
\end{split}
\end{equation}
If   $\gamma'(p)=(0,0)$ for $p\not=0$,   the   second equation of \eqref{p5} gives  
\begin{equation}\label{p4}
\frac{1}{p^2}+2-3p^2=0.
\end{equation}
 This yields $p^2=1$. In such a case, $r(p)=0$, which  is outside the domain of $u$.   This proves that $\gamma$ is regular everywhere.

For the item (4), consider the expression for $u(p)$ given in \eqref{p3}. The function $r'(p)$ in \eqref{p5} is negative because $c_1>0$ and $1+2p^2-3p^4\geq 0$ if $p^2\leq 1$. Taking the variable $p$   for $u$, we have $u\colon [p_R,1)\to\r$. Since $r(p)$ in \eqref{p1} is defined for all $p\in (0,1]$, then $u$ can be extended up to $(0,1]$. Moreover, 
$$\lim_{r\to\infty}u(r)= \lim_{p\to 0}u(p)=\infty,$$
$$\lim_{r\to\infty}u'(r)= \lim_{p\to 0}\frac{u'(p)}{r'(p)}=0.$$ 

Finally, we prove that $u$ is concave, that is, $u''(r)\leq 0$ for all $r$. Using the chain rule, and the expressions for $r(p)$ and $u(p)$ in \eqref{p1} and \eqref{p2}, we have 
$$u''(r)=\frac{r'(p)u''(p)-r''(p)u'(p)}{r'(p)^3}=\frac{p^6}{c1}(-1-2p^2+3p^4)\leq 0$$
for all $p\in (0,1]$. This completes the proof.

\end{proof}

Since at $r=0$, the slope of $u$ is the same as $\mathcal{K}^+$, we say that at $r=0$ the surface has a conical singularity. 

\begin{corollary} Up to a reflection across a horizontal plane and a translation in $\l^3$, non-planar axisymmetric solutions  to Newton's problem in $\l^3$ about the $z$-axis  can be extended to be entire graphs on $\r^2$. Moreover,  these solutions present a unique conical singularity at the origin, that is, the solution is tangent to the lightlike cone $\mathcal{K}_0^+$.
\end{corollary}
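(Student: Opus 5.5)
The plan is to reduce everything to the first integral \eqref{r1} and the explicit parametrization \eqref{p1}--\eqref{p2} obtained above, and then read off the claims from Theorem \ref{t51}.

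\emph{Step 1: normalization.} Let $u=u(r)$ be a non-planar axisymmetric solution, that is, a non-constant solution of \eqref{r1} on some interval $[a,b]$ with $a\geq 0$. Since $u$ is non-constant, the constant satisfies $c_1\neq 0$. A reflection across a horizontal plane replaces $u$ by $-u$ and $c_1$ by $-c_1$, so we may assume $c_1>0$. Then, exactly as in the derivation of \eqref{p1}--\eqref{p2}, the generating curve is $\gamma(p)=(r(p),u(p))$ with $p=u'\in(0,1)$ on the original interval. The constant $c_2$ only shifts $u$ vertically, so a vertical translation lets us fix $c_2=c_1/4$, which is the normalization \eqref{p3}.

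\emph{Step 2: maximal extension.} The formulas \eqref{p1} and \eqref{p3} make sense for every $p\in(0,1]$. As shown in the proof of Theorem \ref{t51}, $r(p)$ is strictly decreasing, with $r\to\infty$ as $p\to0$ and $r\to0$ as $p\to1$. Hence $p\mapsto r(p)$ is a bijection from $(0,1]$ onto $[0,\infty)$. The original solution is the restriction of this curve to a subinterval, by uniqueness of the first-order ODE \eqref{r2} in $p$ for fixed $c_1,c_2$. Therefore $u$ extends to $[0,\infty)$ with $0<u'<1$ for $r>0$, and the rotated surface is an entire spacelike graph over $\r^2$ away from the origin. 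It also satisfies $u(0)=0$ by the choice of $c_2$. This uses items (1) and (4) of Theorem \ref{t51}, whose proofs did not depend on the particular $R,M$.

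\emph{Step 3: the singularity.} By item (2) of Theorem \ref{t51}, $u'(r)\to1$ as $r\to0$, so the tangent line of $\gamma$ at the origin has slope $1$. The rotated surface is then tangent at $(0,0,0)$ to the upper lightlike cone $\mathcal{K}^+_0$. This is the conical singularity, and the surface fails to be spacelike, and fails to be $C^2$ as a graph, only there. For $r>0$ we have $p<1$, so $1-u'^2>0$ and $u$ is smooth, since $\gamma$ is regular by item (3) and $r'(p)\neq0$. The singularity is therefore unique.

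The step I expect to be the main obstacle is the justification in Step 2 that an arbitrary local solution coincides with a piece of the global curve. This is where $c_1\neq0$ forces $u'\neq0$ everywhere, so that $p$ is a legitimate parameter. If the original domain were an annulus avoiding the origin, I would argue by continuation in $p$, which is monotone in $r$.
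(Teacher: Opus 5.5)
Your proposal is correct and follows the route the paper takes implicitly: normalize $c_1>0$ by a reflection and $c_2=c_1/4$ by a vertical translation, then read the claims off the parametrization \eqref{p1}--\eqref{p3} and items (2)--(4) of Theorem \ref{t51}. The step you flag as the main obstacle is immediate, because \eqref{r2} is an algebraic relation rather than an ODE, and $p\mapsto r(p)$ is a strictly decreasing bijection of $(0,1)$ onto $(0,\infty)$; hence $u'(r)$ is determined pointwise by $r$ and $c_1$, and any local solution, including one on an annulus, extends by integration.
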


 \begin{figure}[hbtp]
\begin{center}
\includegraphics[width=.45\textwidth]{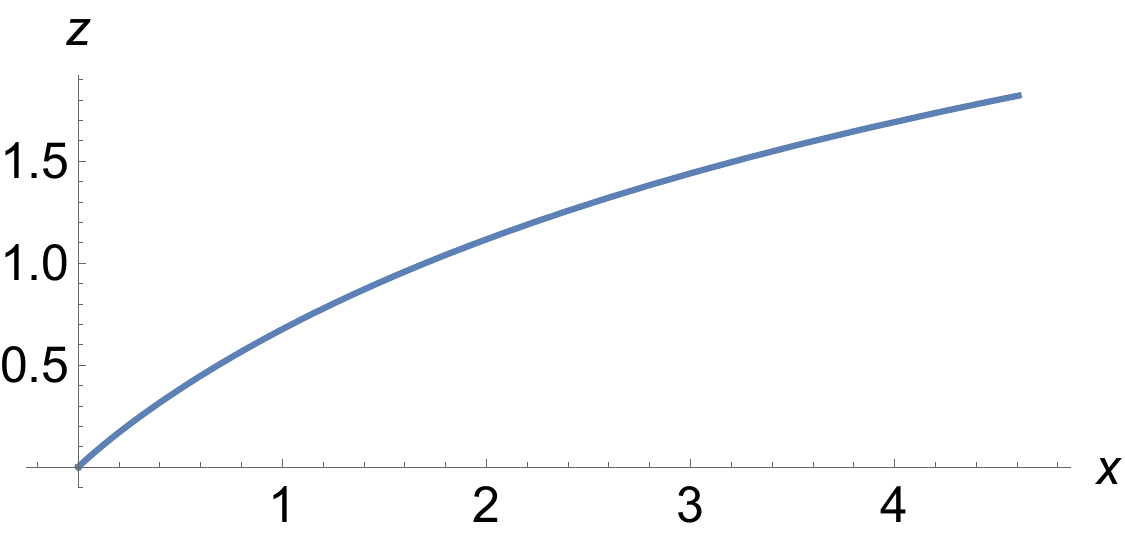}, \includegraphics[width=.5\textwidth]{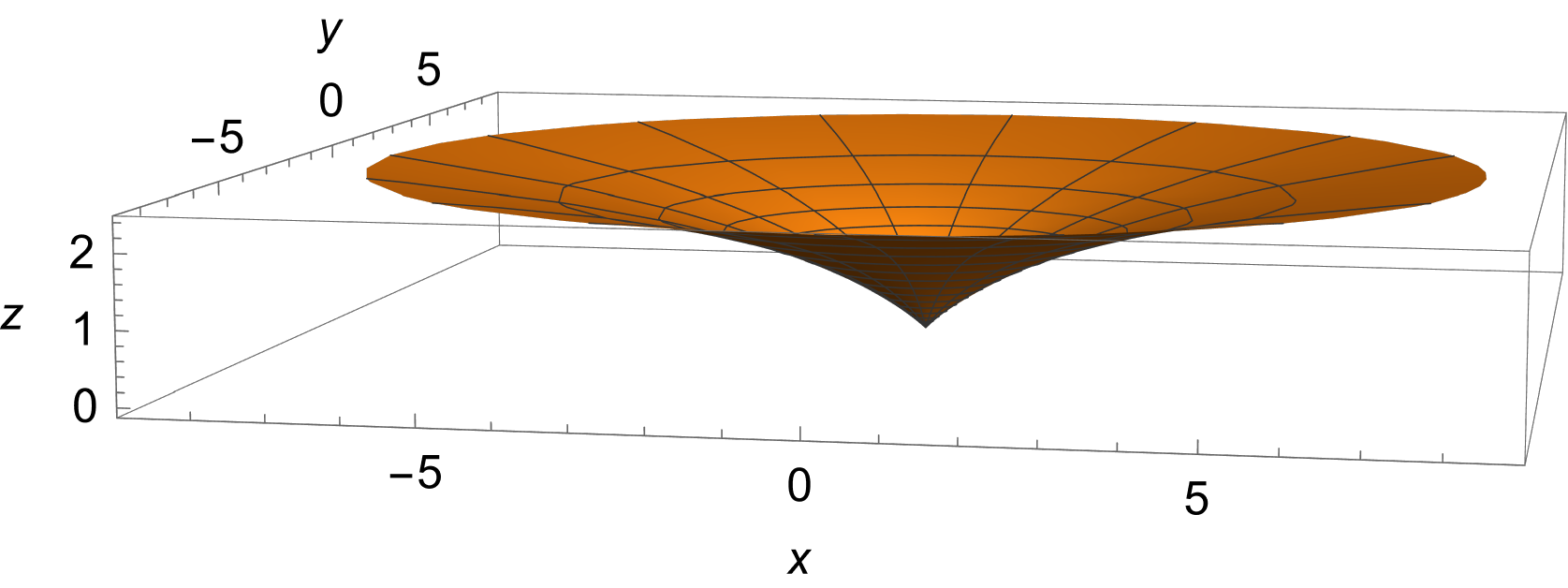}
\end{center}
\caption{Axisymmetric solutions of Newton's problem in $\l^3$: generating curve (left) and rotational stationary surface (right). }\label{fig2}
\end{figure}

\begin{remark} It is worth pointing  to point out a major difference between  the radial extremals of $\r^3$ and $\l^3$. In $\r^3$, radial extremals   cannot reach the rotation axis unless in the trivial case where $u$ is a constant function. In $\l^3$, and besides horizontal planes, radial extremals are defined in punctured disks, and at the origin, the slope of the extremal is $1$.
\end{remark}

We prove that the radial solutions are strong minima in the class of radial functions. Let 
$$C_r([0,R])=\{v\in C^2(0,R)\colon  \lim_{r\to 0}v(r)=0,\,  v(R)=M,\, v'^2<1\}.$$

\begin{theorem} Let $M$ and $R$ be positive numbers such that $M<R$. Let $u=u(r)$ be the radial solution defined in Thm. \ref{t51} with boundary conditions \eqref{c1}. Then $u$ is the strong minimum in  $C_r([0,R])$.
\end{theorem}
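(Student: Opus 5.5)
The plan is to exploit that the Lagrangian $F(r,u,p)=\frac{r}{1-p^2}$ in \eqref{ff} does not depend on $u$ and is strictly convex in $p$ on the spacelike range $|p|<1$. This lets me compare $E[v]$ with $E[u]$ directly through a tangent-line (Weierstrass excess) inequality. Such a comparison needs no $C^1$-closeness of $v$ to $u$, which is exactly what a strong minimum requires.

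\textbf{Step 1: convexity.} A direct computation gives
$$F_p(r,p)=\frac{2rp}{(1-p^2)^2},\qquad F_{pp}(r,p)=\frac{2r(1+3p^2)}{(1-p^2)^3}>0\quad\text{for } r>0,\ |p|<1 .$$
Hence for every $r>0$ and all $p,q\in(-1,1)$ the Weierstrass excess function
$$\mathcal{E}(r,p,q)=F(r,q)-F(r,p)-F_p(r,p)(q-p)$$
is nonnegative, and it vanishes only when $q=p$. In particular, for any $v\in C_r([0,R])$ and every $r\in(0,R)$,
$$\frac{r}{1-v'^2}\geq \frac{r}{1-u'^2}+\frac{2ru'}{(1-u'^2)^2}\,(v'-u').$$

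\textbf{Step 2: integrate and use the first integral.} By \eqref{r1}, $F_p(r,u'(r))=2c_1$ is constant, with $c_1$ given by \eqref{c111}. Integrating the inequality of Step 1 over $[\varepsilon,R]$ gives
$$\int_\varepsilon^R\frac{r\,dr}{1-v'^2}-\int_\varepsilon^R\frac{r\,dr}{1-u'^2}\ \geq\ 2c_1\Big[(v(R)-v(\varepsilon))-(u(R)-u(\varepsilon))\Big].$$
Letting $\varepsilon\to0$, the boundary conditions $v(R)=u(R)=M$ and $\lim_{r\to0}v=\lim_{r\to0}u=0$ make the right-hand side tend to $0$. Multiplying by $2\pi$ and using \eqref{eq2}, this yields $E[v]\geq E[u]$.

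\textbf{Step 3: equality case.} Equality forces $\mathcal{E}(r,u',v')=0$ on $(0,R)$, hence $v'=u'$ there. Together with $\lim_{r\to0}v=\lim_{r\to0}u=0$ this gives $v=u$, so the minimum is strict. One can also phrase Steps 1 and 2 in classical field-theory language. The translates $u+c$ form a field of extremals covering the strip $(0,R]\times\mathbb{R}$, with slope function $u'(r)$, because $F$ is independent of $u$. Positivity of $\mathcal{E}$ for all admissible slopes is then precisely the Weierstrass sufficient condition for a strong minimum.

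\textbf{Main obstacle.} The delicate point is the passage $\varepsilon\to0$, where $u'\to1$ and the integrand degenerates. I must check that $E[u]$ is finite. From \eqref{p1}, $1-p^2\sim\sqrt{r/c_1}$ as $p\to1$, so $\frac{r}{1-u'^2}\sim\sqrt{c_1 r}$, which is integrable at $0$. If $E[v]=\infty$ the inequality is trivial. Otherwise both integrals converge by monotone convergence, since the integrands are positive, and the boundary term is controlled because $F_p(r,u')$ is exactly the constant $2c_1$. No product of a blowing-up coefficient with a vanishing difference has to be estimated.
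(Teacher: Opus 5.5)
Your proof is correct and rests on the same mechanism as the paper's: the Weierstrass excess function $\mathcal{E}(r,u',v')$ for $F=r/(1-p^2)$ is positive. You deduce this from $F_{pp}>0$ on $|p|<1$, while the paper factors $\mathcal{E}$ explicitly. Where the paper simply invokes the Weierstrass sufficiency theorem, you carry out the field argument yourself: you use the first integral $F_p(r,u')=2c_1$ and handle the singular endpoint $r=0$, where $u'\to1$, through the $\varepsilon\to0$ limit and the finiteness of $E[u]$, which actually yields a strict global minimum on $C_r([0,R])$ rather than only a strong local one.
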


\begin{proof} 
  Note that   the   Legendre necessary condition to be a minimum is satisfied for the Lagrangian \eqref{ff} because  
$$F_{pp}(r,u,p)=2r\frac{1+3p^2}{(1-p^2)^3}>0.$$
 The region of  admissible elements $(r,u,u')$, where $u'^2<1$ has the property that if $(r,u.u')$ and $(r,u,v')$ are admissible, then  so is every $(r,u,(1-t)u'+tv')$ for all $t\in [0,1]$.

  We   compute the Weierstrass function $\mathcal{E}$ in the class $C_r([0,R])$. Let $u(r)$ be  the solution given in Thm. \ref{t51}. For a function $v\in C_r([0,R])$, we have
\begin{equation*}
\begin{split}
\mathcal{E}(r,u,u',v')&=F(r,u,v')-F(r,u,u')-(v'-u')\frac{\partial F(r,u,u')}{\partial u'}\\
&=\frac{r (u'-v')^2 \left(u'^2+2 u' v'+1\right)}{\left(1-u'^2\right)^2 \left(1-v'^2\right)}\\
&=\frac{r (u'-v')^2((u'+v')^2+(1-v'^2))}{\left(1-u'^2\right)^2 \left(1-v'^2\right)}.
\end{split}
\end{equation*}
Then $\mathcal{E}(r,u,u',v')>0$ for all $r>0$ because $v'^2<1$. This is sufficient to ensure that $u$ is a strong minimizer. 
\end{proof}

We finish this section by computing   the resistance of a radial solution $u(r)$ from Thm. \ref{t51} numerically. With the change of variable $p=u'(r)$, and taking into account  the value of $c_1$ in \eqref{c111},  we have 
\begin{equation}\label{ee}
\begin{split}
E[u]&=2\pi\int_0^R\frac{r}{1-u'(r)^2}\, dr=2\pi\int_{1}^{p_R}\frac{r(p)r'(p)}{1-p^2}\, dp\\
&=-2\pi\int_{1}^{p_R}c_1^2\frac{(1-p^2)^2(1+3p^2)}{ p^3}\, dp\\
&=-\frac{\pi  \left(3 p_R^6-10 p_R^4+9 p_R^2+4 p_R^2 \log (p_R)-2\right)}{2 \left(1-p_R^2\right)^4}.
\end{split}
\end{equation}

Without loss of generality, and after a rescaling (Prop. \ref{pr-22}), let $R=1$. We will compute $E[u]$ for different values of $M$. Given $M<R$, the scheme begins by finding the value $p_R$ such that  $r(p_R)=1$. This is obtained from Eq. \eqref{c2}. Once we have calculated $p_R$, we compute the resistance $E[u]$ in \eqref{ee}. We also evaluate the resistance of the cone
 $v(r)=\frac{M}{r}$. This cone is spacelike and also satisfies $v(0) =u(0)=0$ and $v(R) =M$. Its resistance is $\pi \frac{R^2}{r^2-M^2}$.  Fixing $R>0$, Table \ref{table1}  shows some values of the resistance in terms of the domain $B_R$ in comparison with the cone under the same boundary conditions.

\begin{table}[h!]
\centering
\begin{tabular}{|c|c|c|c|c|c|c|c|c|c|}
\hline
$M $& 0.1 & 0.2 & 0.3 & 0.4 & 0.5 & 0.6 & 0.7 & 0.8 & 0.9 \\ \hline
$p_R$ & 0.025 & 0.067 & 0.126 & 0.204 & 0.304 & 0.423 & 0.558 & 0.702 & 0.850 \\ \hline
$E[u]$ & 3.155 & 3.212 & 3.334 & 3.552 & 3.916 & 4.525 & 5.613 & 7.874 & 14.798 \\ \hline
$E[v]$& 3.173&3.272 & 3.452& 3.740 & 4.188&4.908 & 6.159&8.726 & 16.534 \\ \hline
\end{tabular}\vspace{0.2cm}
\caption{Computation of the resistance of a radial solution of Thm. \ref{t51}. A comparison with the spacelike cone is also shown for different values of $M$.   Here $R=1$. }\label{table1}
\end{table}
 
\section{The single shock condition}\label{s6}

To compute the total resistance of the body $\mathcal{B}$, we have imposed the ``Single Shock Property" (SSP). In the Lorentzian context, this condition ensures that, after a particle collides with the surface $S$, the particle does not strike the surface a second time. In the classical problem, under this assumption, the model of energy $E_1$ in \eqref{e1} can be considered accurate to the physical reality. In \cite{bfk}, the authors give a sufficient condition to ensure the SSP, which it is known as the Single Shock Condition (SSC). Assuming the SSC, existence of global minimizers among radial solutions were proved in \cite{cl,cl2}. Considering the class of functions satisfying SSC, minimizers of $E_1$ do not exist \cite{pa0,pa1}.  See also   \cite{ak,lw}.

In this section, we define and analyze the SSC in the Lorentzian context. We come back to Fig. \ref{fig1}. Consider the vertical plane determined by the positive direction $\vec{e}_x$ and $\vec{e}_3$.  We follow the same steps as in \cite{bfk}.     Recall that the final velocity of the particle is $\vec{v}_f=-\sinh\theta T-\cosh\theta N$. In terms of the gradient $Du$,  
\begin{equation*}
\begin{split}
\vec{v}_f&=\vec{e}_3+2\langle \vec{e}_3,N\rangle N=\vec{e}_3-2\cosh\theta N\\
&=\frac{1}{1-|Du|^2}\left(-2 |Du |e_x-(1+|Du|^2) \vec{e}_3\right).
\end{split}
\end{equation*}
Observe that 
$\vec{e}_x=\frac{Du}{|Du|}$. Then  the expression for $\vec{v}_f$ is 
$$\vec{v}_f= \vec{e}_3-\frac{2}{1-|Du|^2}(Du,1)=\frac{1}{1-|Du|^2}\left(-2Du\, \vec{e}_x-(1+|Du|^2)\vec{e}_3\right).$$
If $Du=0$, then $\vec{v}_f=-\vec{e}_3$ and the particle does not hit $\mathcal{B}$ again because $S$ is a graph on the $xy$-plane.  Suppose $Du\not=0$. The slope $s_p$ of the final velocity $\vec{v}_f$ is, up to a sign,  
$$s_p=\frac{1+|Du|^2}{2|Du|}.$$
In   the $xz$-plane we identify  $u(x,y)$ with $u(x)$ and $Du(x,y)$ with $u'(x)$.  If $u'(x)>0$ (resp. $u'(x)<0$), then we say that the SSC  is fulfilled  if 
\begin{equation}\label{sic1}
u(x)+u'(x)(\bar{x}-x)\leq u(\bar{x})\quad\mbox{for all}\quad \bar{x}\leq x,\quad \mbox{(resp. $\bar{x}>x$)}.
\end{equation}
 By taking $\bar{x}=x-tu'(x)$,  then \eqref{sic1} becomes  
$$u(x)-u(x-t u'(x))\leq \frac{t}{2}(1+u'(x)^2),\qquad \mbox{for all $t>0$},$$
or equivalently,
\begin{equation}\label{sic2}
\frac{u(x)-u(x-t u'(x))}{t}\leq \frac{ 1+u'(x)^2)}{2}\quad   (SSC),
\end{equation}
for all $t>0$. In case that $u'(x)<0$, the same inequality \eqref{sic2} holds. Coming back to the function $u(x,y)$, we have the following definition.

\begin{definition}  Let $u\in C^1(\Omega)$ be a function such that $|Du|<1$. The function $u$ is said to satisfy  the SSC if  
\begin{equation}\label{sic}
  u(x,y)-u((x,y)-tDu(x,y))\leq \frac{t}{2}(1+|Du(x,y)|^2) 
\end{equation}
for all $t>0$.
\end{definition}

 The SSC  is a global property in the sense that it must be fulfilled for all $t>0$. If $t\to 0$ in \eqref{sic}, it would be expectable to get extra information about the derivative of $u$ at each point. However, it is not.  
Letting $t\to 0$ in \eqref{sic2},   we deduce $ u'(x)^2\leq \frac{ 1+u'(x)^2)}{2}$, which  is trivially fulfilled because of the spacelike condition $u'(x)^2<1$.    

The fundamental difference from the Euclidean case is that any spacelike function $u$ satisfies the SSC regardless the concavity or convexity of the function.

\begin{theorem} \label{t61}
If $u\in C^1(\Omega)$ is a function such that $|Du|^2<1$, then $u$ satisfies the SSC.
\end{theorem}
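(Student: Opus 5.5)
The plan is to prove inequality \eqref{sic} directly by the mean value theorem along the segment from $p=(x,y)$ to $q=p-tDu(p)$, as in the proof of the height estimate in Section \ref{s4}. We will tacitly assume that this segment lies in $\Omega$, or at least in $\overline{\Omega}$ where $|Du|\leq 1$ still holds; for convex $\Omega$ this is automatic whenever $q\in\Omega$. Points $q$ outside the domain impose no condition, since the particle cannot hit the graph there.

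\textbf{Step 1.} Set $f(s)=u(p-sDu(p))$ for $s\in[0,t]$. The mean value theorem gives some $s_0\in(0,t)$ with
\[
u(p)-u(q)=f(0)-f(t)=t\,\langle Du(\xi),Du(p)\rangle,\qquad \xi=p-s_0Du(p).
\]

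\textbf{Step 2.} Bound the inner product by the Cauchy--Schwarz inequality in $\r^2$ together with the spacelike condition:
\[
\langle Du(\xi),Du(p)\rangle\leq |Du(\xi)|\,|Du(p)|\leq |Du(p)|.
\]

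\textbf{Step 3.} Use the elementary inequality $a\leq \frac{1+a^2}{2}$, which holds for every real $a$ because it is equivalent to $(1-a)^2\geq 0$. With $a=|Du(p)|$ this yields
\[
u(p)-u(q)\leq t|Du(p)|\leq \frac{t}{2}\bigl(1+|Du(p)|^2\bigr),
\]
which is \eqref{sic} for every $t>0$. When $Du(p)=0$ the inequality is trivial, since both sides vanish.

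The argument uses only $|Du|\leq 1$ along the segment, not any convexity or concavity of $u$. This matches the remark preceding the statement, and it contrasts with the Euclidean case, where the slope $s_p$ can be smaller than $|Du|$. The only delicate point is domain geometry: we must make sure the segment stays where $u$ is defined. We handle this either by assuming $\Omega$ is convex, or by restricting to those $t$ for which the segment lies in $\Omega$. If the segment leaves the domain, the trajectory exits the region above $\Omega$ and cannot produce a second collision.
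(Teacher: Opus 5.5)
Your proof is correct and follows essentially the paper's route: the mean value theorem along the segment from $p$ to $p-tDu(p)$, then an elementary inequality. The paper works in the vertical plane through $Du(p)$ and writes $1+a^2-2ab=(a-b)^2+1-b^2>0$, with $b$ the slope at $\xi$; you use Cauchy--Schwarz and $(1-a)^2\geq 0$ instead.

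The paper also assumes tacitly that the segment lies in $\Omega$, and convexity is the right way to guarantee this. However, drop your claim that a segment leaving $\Omega$ cannot produce a second collision. For a non-convex domain, such as a U-shaped domain with $u$ much larger on one arm than on the other, the ray can re-enter over another part of $\Omega$, and the SSC inequality then genuinely fails there.
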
 

\begin{proof} It suffices to consider the case $|Du|\not=0$. Consider the plane  determined by $\vec{e}_x$ and $\vec{e}_3$. Then we need to check \eqref{sic2} for all points of $\Omega$. Without loss of generality, suppose $u'(x)>0$. Let $t>0$. The Mean Value Theorem implies that there exists $\xi$ in the segment joining $x$ and $x-tu'(x)$  such that 
$$u(x)=u(x-t u'(x))+tu'(x) u'(\xi),$$
for some $ t_0\in [0,t]$. 
Then the left hand-side of \eqref{sic2} is
$$\frac{u(x)-u(x-t u'(x))}{t}=u'(\xi)u'(x).$$
Thus we have to check 
$$u'(\xi)u'(x)\leq \frac{1+u'(x)^2}{2},$$
that is,  $1+u'(x)^2-2u'(\xi)u'(x)\geq 0$. But this inequality is fulfilled  because 
$$1+u'(x)^2-2u'(\xi)u'(x)=(u'(x)-u'(\xi))^2+1-u'(\xi)^2> 0,$$
and $u'(\xi)^2<1$.
\end{proof}


  \section*{Acknowledgements}
Rafael L\'opez  has been partially supported by MINECO/MICINN/FEDER grant no. PID2023-150727NB-I00,  and by the ``Mar\'{\i}a de Maeztu'' Excellence Unit IMAG, reference CEX2020-001105- M, funded by MCINN/AEI/10.13039/ 501100011033/ CEX2020-001105-M.

\section*{Declarations}
{\bf Funding and/or Conflicts of interests/Competing interests}:  Data sharing does not apply to this article as no datasets were generated or analyzed
during the current study. This work has not received any financial support. In addition, the author declares
that he has no conflict/competing of interests.

\end{document}